\documentclass[11pt,preqno,a4paper]{amsart}

\usepackage{enumitem}
\usepackage{amssymb,amsmath,mathtools}
\usepackage{hyperref}
\usepackage[nobysame]{amsrefs}
\usepackage{xcolor}
\usepackage{tikz}
\usepackage{dsfont}
\usetikzlibrary{angles, quotes, arrows.meta, positioning, calc}

\usepackage{microtype}
\usepackage{bbm}
\usepackage{a4wide}
\newcommand*{\mailto}[1]{\href{mailto:#1}{\nolinkurl{#1}}}

 \allowdisplaybreaks

\newtheorem{theorem}{Theorem}[section]

\newtheorem{lemma}[theorem]{Lemma}
\newtheorem{proposition}[theorem]{Proposition}

\newtheorem{remark}[theorem]{Remark}

\DeclareMathOperator{\supp}{supp}

\newcommand{\be}{\begin{equation}}
\newcommand{\ee}{\end{equation}}

\newcommand{\al}{\alpha}

\newcommand{\eq}{\begin{equation}}
\newcommand{\eeq}{\end{equation}}

\numberwithin{equation}{section}

\makeatletter
\newcommand{\dlmf}[1]{%
	 \cite[%
	\def\nextitem{\def\nextitem{, }}%
	\@for \el\coloneqq#1\do{\nextitem\href{http://dlmf.nist.gov/\el}{(\el)}}%
	]{dlmf}%
}
\makeatother

\begin{document}

\allowdisplaybreaks

\title[Robust Observability on Hyperbolic Surfaces]{Robust Observability for Schr\"odinger Equations with Singular Potentials on 2D Compact Hyperbolic Surfaces}

\author{Jiayan Wu, Ting Zhang and Ruze Zhou}
\address{School of mathematics, South China University of Technology
\\
Guangzhou 510641, China
}
\address{School of mathematics, Zhejiang University
\\
Hangzhou 310058, China
}
\address{School of mathematics, Zhejiang University
\\
Hangzhou 310058, China
}
\email{wujiayan@scut.edu.cn;  zhangting79@zju.edu.cn; 12435042@zju.edu.cn}

\date{\today}
\subjclass[2010]{35Q41, 35B45, 58J50, 93B07}

\keywords{Observability for PDE; Schr\"odinger operator; hyperbolic surface; rough potential; semiclassical analysis; spectral cluster estimate}

\begin{abstract}
This paper investigates the robustness of quantum observability on compact hyperbolic surfaces subject to $L^q$ potential perturbations with $2<q\leq \infty$. Since $L^q$ regularity is strictly subcritical in dimension two, for every non-empty open set $\Omega\subset M$ and $T>0$, the solution of $(i\partial_t+\Delta-V)u=0$ satisfies the space-time observability estimate
$$
\|u_0\|_{L^2(M)}^2
\leq C\int_0^T
\|e^{-it(-\Delta+V)}u_0\|_{L^2(\Omega)}^2\,dt.
$$
A cornerstone of our analysis is Sogge's spectral cluster estimate, which allows us to obtain refined spectrally localized semiclassical Strichartz estimates adapted to rough potentials and to show that the potential's contribution vanishes in the propagation of semiclassical measures. The full-support property of invariant semiclassical measures on hyperbolic surfaces associated with the eigenfunctions then yields the desired observability by contradiction. By the Hilbert Uniqueness Method, the corresponding internal controllability result follows.
\end{abstract}

\maketitle

\vspace{2em} 
\tableofcontents

\section{Introduction}

\subsection{Background}
The mathematical study of observability and controllability for the Schr\"odinger equation originates from fundamental challenges in the control of quantum systems. Historically, the study of observability originated from the problem of quantum state reconstruction (or quantum tomography), where one aims to uniquely determine the initial wave function $u_0$ from partial measurements collected over a spatio-temporal window $\Omega\times[0,T]$ \cite{Artiles2005, Leonhardt1997}. In turn, controllability concerns the ability of an external control, represented by a source or a potential, to steer a quantum system toward a prescribed target state—a task of fundamental importance in laser-induced chemical reactions and the implementation of quantum logic gates \cite{Peirce1988, DAlessandro2007}. From a dynamical perspective, the observability of quantum waves is intimately related to the underlying classical dynamics. In chaotic systems, such as those characterized by Anosov geodesic flows on hyperbolic surfaces, the exponential divergence of trajectories facilitates the rapid spreading of information, leading to the delocalization of eigenfunctions and the enhancement of observability, even in the presence of rough potentials \cite{Gutzwiller1990}. 

The rigorous mathematical study of controllability and observability was pioneered in the 1960s by Kalman \cite{Kalman1960}, who established the fundamental duality between these two concepts for finite-dimensional systems. This framework was subsequently extended to infinite-dimensional settings, such as distributed parameter systems governed by partial differential equations (PDEs), through the seminal works of Fattorini \cite{Fattorini1968} and Russell \cite{Russell1978}. In the 1980s, Lions \cite{LJL1,LJL2} introduced the Hilbert Uniqueness Method (HUM), which provided a systematic functional analytic approach to exact controllability by reducing it to the establishment of appropriate observability inequalities. Since then, the observability of Schr\"odinger-type equations has become a central theme, particularly regarding the influence of the underlying geometry and potential regularity.

From a dynamical systems perspective, the internal controllability of quantum waves is intimately related to the underlying classical geodesic flow. On hyperbolic surfaces, the chaotic nature of the Anosov flow promotes the rapid spreading of eigenfunctions, a phenomenon captured by the Quantum Unique Ergodicity (QUE) conjecture. 

Historically, the simplest geometric setting for such investigations has been the flat torus, where the arithmetic structure of the spectrum allows for precise Fourier-based analysis. Existing results on control and observability inequalities of the form \eqref{controlineq} have been obtained on flat tori, typically $$\mathbb{T}^n=\mathbb{R}^n/\mathbb{Z}^n.$$The first controllability result for $-\Delta$ was proved by Jaffard \cite{jaffard} and Haraux \cite{Haraux} in two dimensions and by Komornik \cite{Komornik} in higher dimensions. It is worth to note that the observational conclusions involving potential energy are also physically meaningful, see \cite{CTR,BZcontrol,AMcon,BBZcon,JLcon}. Then, it is natural to extend the problem to the Schr\"odinger operator $-\Delta+V$. For $V\in C^\infty(\mathbb{T}^2)$, the observability inequality was proved by Burq and Zworski \cite{BZcontrol} and they also conjectured that the observability should hold for $V\in L^\infty(\mathbb{T}^2)$. In the higher dimensions, Anantharaman and Maci\`a \cite[Theorem 4]{AMcon} obtained a relevant result. They proved that for the potential $V$ on $\mathbb{T}^d$ that is continuous almost everywhere and lies in $L^\infty$, which includes continuous potentials, the observability inequality \eqref{controlineq} holds. In the case of the rough potential $V\in L^2$, Bourgain, Burq and Zworski \cite{BBZcon} have established the inequality, and it is this work that inspires the present paper. In addition, they also prove that for functions satisfying the Floquet boundary condition and for $W\in L^p(\mathbb{T}^1),\ p>1$, the observability result will hold in one dimension \cite[Section 3]{BBZcon}. A very recent work \cite{Burq_Germain_Sorella_Zhu_2026} has shown that if the measure $\mu$ on $\mathbb{T}^d$ satisfies certain conditions, then the corresponding observability holds as well.

Observability inequalities for the Schr\"odinger operator $-\Delta+V$ on general manifolds $M$ are also of significant interest. The pioneering work in this area was established by Bardos, Lebeau and Rauch \cite{LG,BLR}. They proved the following geometric control condition:\begin{equation}
    \text{There exists $L=L(M,\Omega)>0$ such that every geodesic of length $L$ intersects $\Omega$,} 
\end{equation}which is necessary for proving the observability inequality when the geodesic flow is periodic. For the case $V\equiv 0$ on a hyperbolic surface of constant negative curvature, the observability was obtained by Dyatlov and Jin \cite{DJ2018,JLcon}. There are also many other results related to observability and control. We refer the reader to \cite{AR,ANM,BZgeometric,TW,HWW,LJL1,LJL2,CB,RTTT,Miller,DM,WWZ,burqzhu,FGW} for further details.


Despite the progress described above,  much less is known for   Schr\"odinger operator with low-regularity potentials on negatively curved manifolds. In particular, it remains unclear to what extent the observability mechanism on hyperbolic surfaces is stable under rough perturbations. Our work is motivated by the observability theorem of Jin \cite{JLcon} for the Schr\"odinger equation on compact hyperbolic surfaces, which relies on the full-support property of semiclassical measures proved by Dyatlov and Jin \cite{DJ2018}. While these results concern the unperturbed Laplacian, it is natural to ask whether the same observability mechanism remains valid for Schr\"odinger operators with low-regularity potentials. The purpose of this paper is to address this question for real-valued potentials $V\in L^\frac{p}{p-2}(M),\, 2\leq p< 4$, where the potential condition holds through the paper.

\subsection{Applications}
The space-time observability inequality \eqref{controlineq} guarantees that for a known background rough potential $V$, the unknown initial state $u_0 \in L^2(M)$ can be stably and uniquely recovered from localized measurements on the subregion $\Omega \times [0,T]$.

Robust observability inequalities play a pivotal role in coefficient inverse problems, such as the determination of quantum potentials (see, e.g., the Bukhgeim–Klibanov framework \cite{KMV1} and related inverse potential stability results \cite{FO,FAO,FLL,KMSSS,ST,CarrilloMercadoMorales,KianSoccorsi,KianPhanSoccorsi}). On compact hyperbolic surfaces, classical Carleman-based inversion methods often encounter severe geometric obstructions due to non-trivial topology and negative curvature. By bridging the Ehrenfest time scale with spectrally localized Strichartz estimates, our work provides a robust analytical foundation that supports stable parameter identification and inversion algorithms in chaotic quantum systems.

\section{Main Results and Proof Strategy}

\subsection{Main result}Let $M$ be a compact connected hyperbolic surface of dimension $2$ with constant curvature $-1$, and let $\Delta_g$ be the Laplace-Beltrami operator on $M$. For the Riemannian manifold $(M,g)$,  $\Delta_g$ is locally given by \begin{equation}
    \Delta_gf=\frac{1}{\sqrt{|g|}}\sum_{i,j}\partial_i\left(\sqrt{|g|}g^{ij}\partial_jf\right),
\end{equation}where $(g^{ij})=g^{-1}$ denotes the inverse of the metric tensor and $|g|=\det(g_{ij})$ is the determinant of the metric matrix. The operator coincides with the usual Laplacian and we can see \cite[Chapter 3]{Riemgeoana} for further details. For simplicity, we shall denote $\Delta_g$ by $\Delta$ throughout the paper.

We consider the following Schr\"odinger equation\begin{equation}
    (i\partial_t+\Delta-V)u=0, 
\end{equation}
on $M$, where $V\in L^\frac{p}{p-2}(M),\, 2\leq p< 4$ is a real-valued potential. We denote by $e^{-it(-\Delta+V)}$   the unitary group generated by the self-adjoint operator $-\Delta+V$ with domain $H^2(M)$.
\begin{remark}
    It is worth noting that $p=2$ corresponding to the case $V\in L^\infty$ that has been stated to be easy to prove in \cite[Remark 1.4]{JLcon}. Thus, we only need to consider the case $2<p< 4$. And obviously the spectral of $-\Delta+V$ will be extended finitely to the negative half of the real axis. Then, we can reset the self-adjoint operator $-\Delta+V+C$ whose spectral is strictly positive. Hence, the unitary group $e^{-it(-\Delta+V)}$ naturally transformed into $e^{-it(-\Delta+V+C)}=e^{-itC}e^{-it(-\Delta+V)}$ which merely has an additional constant coefficient that does not work under the norm compared to $e^{-it(-\Delta+V)}$. Thus, for the sake of simplicity, we set the spectral of $-\Delta+V$ is positive in the thorough paper.
\end{remark}

The main purpose of this work is to establish the following space‑time observability inequality.\begin{theorem}\label{obser}
    Let $\Omega\subset M$ be a non-empty open set and  $T>0$. Then there exists a constant $K>0$, depending only on $M,\ \Omega,\ V$ and $T$, such that for any $u_0\in L^2(M)$, we have \begin{equation}\label{controlineq}
        \|u_0\|_{L^2(M)}^2\leq K\int_0^T\|e^{-it(-\Delta+V)}u_0\|^2_{L^2(\Omega)}\,dt.
    \end{equation}
\end{theorem}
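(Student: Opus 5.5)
We prove Theorem \ref{obser} by following Jin's strategy for $V\equiv0$ \cite{JLcon}: first a semiclassical, spectrally localized observability estimate; then a Bardos--Lebeau--Rauch type compactness--uniqueness argument to remove the low frequencies. Write $P_V=-\Delta+V$, which we may assume positive. Fix $\chi\in C_c^\infty((1/2,2))$ and set $\chi_h=\chi(h^2P_V)$.

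\textbf{Step 1: semiclassical observability.} The high-frequency claim is that there exist $h_0>0$ and $C>0$ such that
\begin{equation*}
\|\chi_h u_0\|_{L^2}^2\le C\int_0^T\|e^{-itP_V}\chi_h u_0\|_{L^2(\Omega)}^2\,dt,\qquad 0<h<h_0 .
\end{equation*}
We argue by contradiction. Suppose there are $h_n\to0$ and normalized $u_n=\chi_{h_n}u_{0,n}$ whose observation tends to $0$. Let $\mu$ be a semiclassical defect measure on $[0,T]\times T^*M$ of $u_n(t)=e^{-itP_V}u_n$, after rescaling time to the semiclassical scale $t=h s$ as in \cite{JLcon}.

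The key point is that $\mu$ is invariant under the geodesic flow. The commutator $[P_V,\mathrm{Op}_h(a)]$ contains the term $[V,\mathrm{Op}_h(a)]$. Its contribution to $\frac{d}{dt}\langle \mathrm{Op}_h(a)u,u\rangle$ must be shown to vanish as $h\to0$. For this we use Sogge's spectral cluster estimate
\begin{equation*}
\|\mathbf 1_{[\lambda,\lambda+1]}(\sqrt{-\Delta})f\|_{L^p}\lesssim\lambda^{\sigma(p)}\|f\|_{L^2},\qquad \sigma(p)=\tfrac12-\tfrac1p \text{ for } 2\le p\le 6,
\end{equation*}
summed over the $\sim h^{-1}$ clusters in the window and transferred to $P_V$ by a resolvent or Duhamel perturbation argument. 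Combined with a localized Strichartz bound in time, this gives
\begin{equation*}
\|V u\,\bar w\|_{L^1}\le\|V\|_{L^{p/(p-2)}}\|u\|_{L^p}\|w\|_{L^p}.
\end{equation*}
The resulting loss is $h^{-2\sigma(p)}=h^{-(1-2/p)}$, which beats the factor $h$ gained from the semiclassical time scaling exactly because $p<4$, i.e.\ $1-2/p<1/2$. Hence the potential term is $o(1)$. The spectral localization also makes $\mu$ supported on $S^*M$, with total mass $1$ for a.e.\ $t$. The time-averaged measure is then an invariant probability measure on $S^*M$ vanishing on $T^*\Omega$.

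Contradicting the full support theorem of Dyatlov--Jin \cite{DJ2018} requires that the measure arise from (quasi)modes. Following \cite{JLcon}, we reduce to $O(h/\log(1/h))$-quasimodes of $h^2P_V$ on time intervals of Ehrenfest length. Here the $V$-term is again an $o(h/\log(1/h))$ error by the Sogge/Strichartz bound above: the slack from $p<4$ is a power of $h$, so it absorbs the logarithm. We then apply \cite{DJ2018}, which holds for $O(h/\log(1/h))$ quasimodes, to get the contradiction.

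\textbf{Step 2: gluing frequencies.} Take a dyadic partition of unity $\sum_k\chi(2^{-2k}P_V)+\chi_0(P_V)=1$ and apply Step 1 to each piece. The observation terms are recombined using almost orthogonality in time, via a time cutoff and a Littlewood--Paley commutator estimate, at the cost of enlarging $T$ slightly; $T$ is arbitrary, so this is harmless. This yields the weak observability
\begin{equation*}
\|u_0\|^2\le C\int_0^T\|e^{-itP_V}u_0\|_{L^2(\Omega)}^2\,dt+C\|u_0\|_{H^{-2}}^2 .
\end{equation*}

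\textbf{Step 3: removing the compact term.} By the usual compactness--uniqueness argument, it suffices to show that the space $N_T$ of invisible data, those with $e^{-itP_V}u_0=0$ on $\Omega$ for $t\in(0,T)$, is trivial. This space is finite dimensional and $P_V$-invariant, so it contains an eigenfunction $P_V\phi=\lambda\phi$ with $\phi|_\Omega=0$. Unique continuation for $-\Delta+V$ with $V\in L^{q}$, $q>1$ in two dimensions (Carleman estimates of Jerison--Kenig type) then forces $\phi\equiv0$.

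\textbf{Main obstacle.} The hardest step is Step 1: proving that the $L^{p/(p-2)}$ potential is negligible at the Ehrenfest time scale uniformly, so that the Dyatlov--Jin quasimode input still applies.
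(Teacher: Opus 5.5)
Your overall scheme is the paper's, which in turn follows Jin: high-frequency observability from the Dyatlov--Jin estimate with $V$ treated perturbatively via Sogge-type cluster bounds, dyadic gluing with a time-cutoff commutator, and unique continuation for the low frequencies. \textbf{The step that fails as written is the reduction to quasimodes in Step 1,} which carries the whole argument. The invariance of the time-dependent measure is not needed for the contradiction; the paper notes the same about Theorem~\ref{Liederiv}. Suppose you cut the solution off to a window of Ehrenfest length $\sim\log(1/h)$ in semiclassical time and Fourier transform. The quasimode error is then $c\,h/\log(1/h)$, with $c\sim\|\psi'\|_{L^2}/\|\psi\|_{L^2}$ a fixed constant of order one. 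The Dyatlov--Jin full-support theorem is for $o(h/\log(1/h))$ quasimodes, not $O(h/\log(1/h))$. In their quantitative estimate (Theorem~\ref{staest}) such an error contributes $C(a)\frac{\log(1/h)}{h}\cdot c\,\frac{h}{\log(1/h)}\|u\|=C(a)c\,\|u\|$, which is not small and cannot be absorbed. So the input ``which holds for $O(h/\log(1/h))$ quasimodes'' is not available. You need windows much longer than the Ehrenfest time; length $K\log(1/h)$ with $K\gg C(a)$ would do. The paper, like Jin, uses the whole observation window, of semiclassical length $\sim 1/h$. In Proposition~\ref{semiob}, $w(t)=\psi(ht)v(t)$ satisfies $(P_V-\tau)\check w_h(\tau)=-ih^2\mathcal F_h^{-1}[\psi'(ht)v](\tau)$. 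This is an $O(h^2)$ error in $L^2_\tau$, so after the $\log(1/h)/h$ loss only $h\log(1/h)$ remains. The paper then applies the stationary estimate for each $\tau$ and integrates in $\tau$, with no limit measure.

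The perturbative bound you cite for this step is also not the one that is needed. The quasimode step requires an $L^2$ bound on the error term $h^2Vu$, not the bilinear $L^p\times L^p$ bound used for invariance. In the proof of Theorem~\ref{staobser}, the paper writes $\frac{\log(1/h)}{h}\|h^2Vu\|_{L^2}\le h\log(1/h)\|V\|_{L^{p/(p-2)}}\|u\|_{L^q}$ with $q=\frac{2p}{4-p}$. Theorem~\ref{sogge_disturb}, summed over the $O(h^{-1})$ unit clusters, then gives the bound $h^{1/2-\delta(q)}\log(1/h)\|u\|_{L^2}$. This is exactly where $p<4$ enters, since $\delta(q)<\frac12$ iff $q<\infty$. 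Your attribution of $p<4$ to the invariance step is a miscount: a loss $h^{-(1-2/p)}$ is beaten by the gain $h$ for every $2\le p\le 6$. Also, Sogge's two-dimensional exponent on $[2,6]$ is $\frac14-\frac{1}{2p}$; your $\frac12-\frac1p$ is true but not sharp. With these repairs, Step 1 becomes Theorem~\ref{staobser} plus Proposition~\ref{semiob}. Your Step 3 is a valid alternative to the paper's. The paper instead takes a weak limit of the contradiction sequence and extends the vanishing on $\Omega$ to all $t\in\mathbb{R}$ by holomorphy in $\mathrm{Im}\,t<0$. It then isolates each eigenprojection with the spectral averaging Lemma~\ref{spectral_ave} before invoking unique continuation. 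This avoids proving that the invisible space is finite dimensional and invariant under $-\Delta+V$.
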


We can also obtain the following stationary observability inequality. \begin{theorem}\label{staobser}
    Assume $a\in C_c^\infty(T^*M)$ satisfies $a|_{S^*M}\not\equiv 0$. Then there exist constants $C,\ h_0>0$ depending on $M,\ a,\ V$, such that for any $0<h<h_0,\ u\in H^2(M)$, \begin{equation}\label{staineq}
        \|u\|_{L^2}\leq   C\left(\| \mathrm{Op}_h(a) u \|_{L^2} + \frac{\log(1/h)}{h} \| (-h^2\Delta+h^2V-1) u \|_{L^2}\right).
    \end{equation}
\end{theorem}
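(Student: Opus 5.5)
The plan is to reduce Theorem~\ref{staobser} to the known unperturbed estimate of Dyatlov--Jin (as used by Jin \cite{JLcon}). For $a\in C_c^\infty(T^*M)$ with $a|_{S^*M}\not\equiv0$ that estimate reads
\[
\|u\|_{L^2}\le C\Bigl(\|\mathrm{Op}_h(a)u\|_{L^2}+\frac{\log(1/h)}{h}\|(-h^2\Delta-1)u\|_{L^2}\Bigr),\qquad u\in H^2(M).
\]
Applying it directly to $u$ and writing $-h^2\Delta-1=(-h^2\Delta+h^2V-1)-h^2V$ leaves the error term $h\log(1/h)\|Vu\|_{L^2}$. Since $V\in L^{p/(p-2)}$ is unbounded, this is not controlled by $\|u\|_{L^2}$. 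So we first spectrally localize $u$ and use Sogge's spectral cluster estimates to show that $Vu$ is small in the relevant regime.

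\textbf{Step 1 (reduction to a spectral window).} Write $P_V=-h^2\Delta+h^2V$. Choose $\chi\in C_c^\infty((1/2,2))$ with $\chi=1$ near $1$, and set $u=\chi(P_V)u+(1-\chi)(P_V)u$. On the support of $1-\chi$ we have $|\lambda-1|\gtrsim1$, so the spectral theorem gives $\|(1-\chi)(P_V)u\|\le C\|(P_V-1)u\|$. This is better than required, since $h/\log(1/h)\le 1$. It therefore suffices to prove \eqref{staineq} for $v=\chi(P_V)u$. The error $\mathrm{Op}_h(a)(1-\chi)(P_V)u$ is absorbed the same way, because $\mathrm{Op}_h(a)$ is bounded on $L^2$.

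\textbf{Step 2 (smallness of $Vv$).} Hölder's inequality gives $\|Vv\|_{L^2}\le\|V\|_{L^{p/(p-2)}}\|v\|_{L^p}$. The function $v$ is spectrally localized for $-\Delta+V$ at frequencies $\sim h^{-1}$, i.e. in a union of $O(h^{-1})$ unit-width clusters. The key tool is a Sogge-type spectral cluster bound for $-\Delta+V$ on windows of width $1$:
\[
\|\mathbbm 1_{[\lambda,\lambda+1]}(\sqrt{-\Delta+V})\|_{L^2\to L^p}\lesssim\lambda^{\frac12-\frac2p},\qquad 2\le p<4 .
\]
This extends to rough $V$ by a perturbation argument, which closes because $L^{p/(p-2)}$ is subcritical in 2D. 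The estimate is the one the paper's abstract attributes to Sogge; I would prove or invoke it first.

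I would not sum over the $h^{-1}$ clusters. Instead, apply the Dyatlov--Jin inequality to each cluster piece $v_j=\mathbbm 1_{[\lambda_j,\lambda_j+1]}(\sqrt{-\Delta+V})v$, with the semiclassical parameter adapted to $\lambda_j$, and then recombine by orthogonality. On a single cluster, $\|(-h^2\Delta-1)v_j\|\le\|(P_V-1)v_j\|+h^2\|Vv_j\|$, and
\[
h^2\|Vv_j\|\lesssim h^2h^{-\frac12+\frac2p}\|v_j\|=h^{\frac32+\frac2p}\|v_j\|.
\]
Multiplied by $\log(1/h)/h$, this becomes $h^{\frac12+\frac2p}\log(1/h)\|v_j\|=o(1)\|v_j\|$, which is absorbed into the left-hand side.

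\textbf{Main obstacle.} The hard part is the recombination across clusters. The operator $\mathrm{Op}_h(a)$ does not commute with the cluster projections, so almost-orthogonality must be justified, and the $(P_V-1)$ term must be summed correctly. To avoid this I would first try a contradiction argument. Suppose there are $h_k\to0$ and $u_k$ with $\|u_k\|=1$, $\|\mathrm{Op}_{h_k}(a)u_k\|\to0$, and $\|(P_V-1)u_k\|=o(h_k/\log(1/h_k))$. Step 2 shows $h_k^2\|Vu_k\|_{L^2}\to0$ at the rate $o(h/\log(1/h))$, using $L^p$ bounds for $O(h/\log)$-quasimodes obtained from the cluster estimate via $\|\mathbbm 1_{|P_V-1|\le h}\|_{L^2\to L^p}$. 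Then $u_k$ is an $o(h/\log(1/h))$-quasimode of the unperturbed operator $-h^2\Delta-1$, which contradicts Dyatlov--Jin. The remaining check is that the quasimode $L^p$ bound for the $V$-perturbed operator holds on windows of width $h$; this is precisely the refined spectral cluster estimate for rough potentials that the paper develops.
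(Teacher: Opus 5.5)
Your architecture is the paper's. Step~1 is exactly its reduction to $\chi(P_V)u$, and the paper then applies Theorem~\ref{staest} to $\chi(P_V)u$ and treats $h^2Vu$ as an error controlled by the cluster bounds of Theorem~\ref{sogge_disturb}. However, Step~2 as written contains two errors, and the whole computation rests on them.

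\textbf{Wrong H\"older exponent.} H\"older gives $\|Vv\|_{L^2}\le\|V\|_{L^{p/(p-2)}}\|v\|_{L^{r}}$ with $r=\frac{2p}{4-p}$, not $r=p$; indeed $\frac{p-2}{p}+\frac1p=\frac{p-1}{p}\neq\frac12$. The pairing $\frac{p-2}{p}+\frac2p=1$ is the one for the bilinear quantity $\int V\,Au\,\overline{u}$ in Theorem~\ref{Liederiv}, not for $\|Vu\|_{L^2}$.

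\textbf{False cluster bound.} The bound $\|\mathbbm 1_{[\lambda,\lambda+1]}(\sqrt{-\Delta+V})\|_{L^2\to L^p}\lesssim\lambda^{\frac12-\frac2p}$ for $2\le p<4$ is false. Its exponent is negative, which is impossible on a compact surface, since every $f$ in the range satisfies $\|f\|_{L^p}\ge\mathrm{vol}(M)^{\frac1p-\frac12}\|f\|_{L^2}$. In dimension two Sogge's exponent is $\delta(p)=\frac14-\frac1{2p}$ for $2\le p\le 6$; the formula $\frac12-\frac2p$ applies only for $p\ge6$.

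With the correct exponents the relevant quantity is $\delta(r)$ with $r=\frac{2p}{4-p}$. It equals $\frac{p-2}{2p}$ for $p\le3$ and $\frac{3p-8}{2p}$ for $3\le p<4$. The fact that makes everything work is $\delta(r)<\frac12$, and this is exactly where $p<4$ enters. Your factor $h^{\frac12+\frac2p}\log(1/h)$ hides this completely; the correct single-cluster factor is $h^{1-\delta(r)}\log(1/h)$.

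\textbf{The recombination obstacle is self-inflicted.} Applying Theorem~\ref{staest} cluster by cluster with an adapted $h$ would indeed wreck $\mathrm{Op}_h(a)$, but you do not need it. Apply Theorem~\ref{staest} once to $v=\chi(P_V)u$ with the fixed $h$, and estimate the error crudely. Since $v$ lives in $O(h^{-1})$ unit clusters with $\lambda\in[\frac{1}{2h},\frac{2}{h}]$, the triangle inequality, Theorem~\ref{sogge_disturb} and Cauchy--Schwarz give $\|v\|_{L^r}\lesssim h^{-\frac12-\delta(r)}\|v\|_{L^2}$. The error is then $\lesssim h^{\frac12-\delta(r)}\log(1/h)\|V\|_{L^{p/(p-2)}}\|v\|_{L^2}$, which is absorbed for small $h$. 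The $h^{-1/2}$ loss you wanted to avoid is affordable, and this is the paper's proof.

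\textbf{Your contradiction route.} It also works once the exponents are corrected, and it even gives the better factor $h^{1-\delta(r)}\log(1/h)$. Two points need fixing. First, the window bound for $\mathbbm 1_{|P_V-1|\le h}$ is nothing refined: that window is $O(1)$ unit clusters, so Theorem~\ref{sogge_disturb} applies directly. Second, that bound does not cover the part of $u_k$ outside the window. For that you need a dyadic decomposition in $|P_V-1|$, as in the proof of Proposition~\ref{resolwithV}, using $\|(P_V-1)u_k\|_{L^2}=o(h/\log(1/h))$ on each dyadic piece, or else you must perform Step~1 first.
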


The two observational results in Theorem~\ref{obser} and Theorem~\ref{staobser} are closely related. Within the framework of \cite{JLcon}, it is natural to expect that Theorem~\ref{obser} could be recovered from the stationary estimate in Theorem~\ref{staobser} via a standard transference principle. Although such a derivation appears feasible, we focus here on providing another direct approach. By employing $L^p$ spectral clusters estimates for the stationary case and spectrally localized semiclassical Strichartz estimates for the dynamic case, we aim to highlight the intrinsic consistency of high-energy delocalization in both settings. Once we prove Theorem~\ref{obser}, using the HUM method \cite{LJL1,LJL2}, the following control result is valid.
\begin{theorem}
    Let $\Omega\subset M$ be a non-empty open set and let $T>0$. For any $u_0\in L^2(M)$, there exists $f\in L^2([0,T]\times\Omega)$ such that the solution of the equation\begin{equation}
        \begin{cases}
            \,(i\partial_t+\Delta-V(x))u(t,x)=f \mathds{1}_{[0,T]\times\Omega}(t,x) \\
            \,u(0,\cdot)=u_0
        \end{cases}
    \end{equation}satisfies \begin{equation}
        u(T,\cdot)\equiv0.
    \end{equation}
\end{theorem}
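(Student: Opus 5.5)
The plan is to deduce the control result from the observability inequality of Theorem~\ref{obser} by the Hilbert Uniqueness Method. Write $P=-\Delta+V$; it is self-adjoint on $L^2(M)$ with domain $H^2(M)$, so $S(t)=e^{-itP}$ is a strongly continuous unitary group on $L^2(M)$. For $f\in L^2([0,T]\times\Omega)$, extended by zero outside $\Omega$, the Duhamel formula
\[
u(t)=S(t)u_0-i\int_0^t S(t-s)\,\mathds{1}_\Omega f(s)\,ds
\]
defines the mild solution $u\in C([0,T];L^2(M))$. The condition $u(T)=0$ is equivalent to
\[
S(T)u_0=i\int_0^T S(T-s)\mathds{1}_\Omega f(s)\,ds .
\]
Applying the unitary $S(-T)$, it becomes $u_0=i\int_0^T S(-s)\mathds{1}_\Omega f(s)\,ds$.

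To rephrase this as a surjectivity statement, define the bounded operator
\[
L:L^2([0,T]\times\Omega)\to L^2(M),\qquad Lf=i\int_0^T S(-s)\mathds{1}_\Omega f(s)\,ds .
\]
Since $S(-s)^*=S(s)$, its adjoint is
\[
(L^*\phi)(s)=-i\,\mathds{1}_\Omega\, S(s)\phi,
\]
so that
\[
\|L^*\phi\|^2_{L^2([0,T]\times\Omega)}=\int_0^T\|e^{-isP}\phi\|^2_{L^2(\Omega)}\,ds .
\]
Theorem~\ref{obser} then says exactly that $\|\phi\|_{L^2(M)}\le K^{1/2}\|L^*\phi\|$ for every $\phi$. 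By the closed range theorem, a lower bound on the adjoint gives that $L$ is surjective. So for every $u_0$ there is an $f$ with $Lf=u_0$, which gives $u(T)=0$.

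To make the control explicit and give a bound, I will use the HUM construction. Set $\Lambda=LL^*$, which is bounded, self-adjoint and nonnegative on $L^2(M)$. We have
\[
\langle\Lambda\phi,\phi\rangle=\int_0^T\|S(s)\phi\|_{L^2(\Omega)}^2\,ds\ge K^{-1}\|\phi\|^2 .
\]
So $\Lambda$ is coercive and, by Lax--Milgram, invertible. Take $\phi_0=\Lambda^{-1}u_0$ and
\[
f(t,x)=-i\,\mathds{1}_\Omega(x)\,\bigl(e^{-itP}\phi_0\bigr)(x).
\]
Then $Lf=u_0$ and $\|f\|_{L^2}\le K^{1/2}\|u_0\|_{L^2}$.

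The only point needing care is checking that the mild solution is a genuine solution of the equation: for $u_0\in H^2$ and smooth-in-time $f$ it is a strong solution, and density gives the general case. The obstacle of the problem is entirely contained in Theorem~\ref{obser}, which is assumed.
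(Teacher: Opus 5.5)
Your proposal is correct and is the route the paper takes. The paper only cites the Hilbert Uniqueness Method on Theorem~\ref{obser}, and you supply the standard details correctly: you compute $L^*\phi=-i\mathds{1}_\Omega e^{-isP}\phi$, invoke the closed range theorem, and use the coercivity of $\Lambda=LL^*$ to obtain the explicit control $f=L^*\Lambda^{-1}u_0$ with $\|f\|\le K^{1/2}\|u_0\|$.
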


\begin{remark} \label{rem:generalizations}
We briefly discuss several potential generalizations of our observability results (Theorem~\ref{obser} and~\ref{staobser}). First, it is obviously that weather the observability results for $V\in L^2$ are valid is still unknown. To prove the conclusion, we need to refine the estimates used during our statement or explore new methods. And while our proof heavily relies on the assumption of the constant curvature $K=-1$ in order to ensure the full support of the measure \cite{DJ2018}, it is not difficult to prove the observability conclusions with variable negative curvatures, since the full-support property proved by Dyatlov, Jin and Nonnenmacher \cite{DJN}. Then, the extension of our results to compact hyperbolic manifolds of dimension $d \ge 3$ constitutes another promising direction. However, our argument does not directly apply because it heavily relies on two-dimensional geometric properties. Therefore, we need to develop a new method. Our dynamic observability holds for extremely rough potentials $V \in L^p(M),\, 2< p\leq \infty$ in two dimensions. For higher-dimensional settings, one might consider adjusting the regularity class to $V \in L^p(M)$ with $p \ge d/2$, where similar Strichartz and resolvent estimates could be established within our chaotic spectral framework. We note that the endpoint case $p=d/2$ is scaling-critical and typically involves additional technical nuances compared to the subcritical case $p>d/2$. Finally, we can also consider our system with the time-related potential or try to prove the observability of the Laplacian on the fractal subsets. However, under these assumptions, our method may fail and require a different approach.
\end{remark}

\subsection{Innovation and strategy}

The main contribution of this work is to establish observability inequalities for the Schr\"odinger operator on a compact hyperbolic surface $M$ with a rough potential $V\in L^\frac{p}{p-2}(M)$. This result can be viewed from two complementary perspectives.

First, compared with the work of Bourgain–Burq–Zworski \cite{BBZcon} on two-dimensional flat tori with rough potentials $V\in L^2$ , our setting replaces the flat torus by a compact hyperbolic surface, where the geodesic flow is Anosov and no global Fourier lattice structure is available precluding the use of classical discrete harmonic analysis methods. The main difficulty is therefore to combine rough-potential perturbation arguments with the microlocal and dynamical features of negatively curved geometry.

Second, compared with the results of Dyatlov–Jin \cite{DJ2018} and Jin \cite{JLcon} on compact hyperbolic surfaces, which treat the free Schr\"odinger equation and rely on the full-support property of semiclassical measures, the present work allows an $L^\frac{p}{p-2}$-rough potential. Note that in dimension two, $V\in L^\frac{p}{p-2}(M),\, 2\leq p< 4$ is strictly subcritical with respect to the scaling of the Laplacian, as the critical regularity is $V\in L^1(M)$. This subcriticality is crucial for our analysis, as it allows the potential to be treated as a perturbation of the semiclassical Laplacian with a quantitative power gain in $h$. In this sense, our theorem extends the   observability theory on hyperbolic surfaces from the unperturbed Laplacian to the Schr\"odinger operator $-\Delta+V$ with low-regularity potentials. The corresponding controllability result follows by the Hilbert Uniqueness Method.

To achieve this, we develop an auxiliary semiclassical estimate that is of independent interest. We establish a spectrally localized semiclassical Strichartz estimate (Theorem~\ref{semistrichartz}) tailored to the $L^\frac{p}{p-2}$ -regularity of $V$, which bridges the gap between high-energy eigenfunction behavior and the dynamical stability of the Schr\"odinger flow.

Finally, we provide a physical heuristic for the role of rough potentials. From a physical perspective, the inclusion of a rough potential $V\in L^\frac{p}{p-2}(M)$ introduces a form of microscopic scattering. In integrable systems, quantum scarring or concentration of eigenfunctions (e.g., along periodic orbits) arises from the constructive interference of waves. On hyperbolic surfaces, the underlying geodesic flow is already Anosov (chaotic), which promotes delocalization. Our results show that an $L^\frac{p}{p-2}$ potential, despite its low regularity, is subcritical enough that it cannot create new localized trapping centers or scars. Instead, the high-energy limit remains governed by the ergodic properties of the free Hamiltonian. This indicates that in the propagation of semiclassical measures, the contribution of the potential term vanishes in the semiclassical limit; see Section~\ref{proofofsta}.




The paper proceeds in the following steps. Firstly, we translate the spectral bounds into a resolvent estimate in $L^{p'}\to L^p$(Proposition~\ref{resolwithV}), via the $TT^*$ argument and spectral clusters estimate established by Sogge. Using the semiclassical Fourier transforms, we then derive the spectrally localized semiclassical Strichartz estimates, which provide the necessary control over the time-dependent evolution under the $L^\frac{p}{p-2}$ potential. Next, we analyze the semiclassical defect measure $\mu$ associated with a sequence of solutions. We demonstrate that $\mu$ is invariant under the Hamiltonian flow (i.e., the geodesic flow). In the hyperbolic setting, the ergodicity and full-support property of such measures (following Dyatlov-Jin \cite{DJ2018}) ensure that $\mu$ cannot vanish on any non-empty open set $\Omega$. Finally,  we argue by contradiction and construct a measure that vanishes on the observation region. The invariance and full support of the measure then lead to a contradiction for high-frequency terms. The low-frequency regime is handled via the unique continuation property (UCP), completing the proof.

\subsection{Notation}

Throughout the paper, we write $A\lesssim B$ to denote $A\leq CB$, where the implicit constant $C$, which may vary from line to line, is generic and depends on $M,\Omega$ and the fixed time $T$, but not on the semiclassical parameter $h$. We also assume that there exists a constant $h_0>0$ such that $0<h<h_0$.

We define the spectral projection operator by \begin{equation}
    \Pi_{[a,b]}(P)f=\sum_{\lambda_j\in [a,b]}\langle f,\psi_j\rangle\psi_j,\quad \mbox{for } f\in L^2(M),
\end{equation}
which projects onto the frequency interval $[a,b]$, where $\{\psi_j\}_{j=0}^\infty$ denotes the orthogonal basis of the eigenfunctions of $P$ corresponding to the eigenvalue $\lambda_j^2$, respectively.

\section{Semiclassical Analysis and Refined Spectral Estimates}\label{prelim}

\subsection{Semiclassical analysis and the defect measure}\label{semidef}

In this subsection, we recall some definitions and facts from semiclassical analysis that will serve as the core framework throughout the proof. For the convenience of semiclassical analysis, we shall henceforth adopt the parameter $h=\lambda^{-1}\to 0$.

We now introduce the cotangent space $T^*M$. For each $x\in M$, the collections of all the tangent vectors at the point $x$ constitutes the tangent space $T_xM$ whose dual is denoted by $T^*_xM$. The cotangent bundle $T^*M$ is defined as \begin{equation}
    T^*M=\bigsqcup_{x\in M}T^*_xM,
\end{equation}and the elements $(x,\xi)\in T^*M$ carries both the position information and the frequency information at $x$. And $S^*M=\{(x,\xi)\in T^*M\ |\  |\xi|_g=1\}$ denotes the unit cotangent sphere bundle.

Next, we introduce the semiclassical quantization operator on the manifold. To this end, let us first recall the definition of the symbol class. For $m\in \mathbb{R}$ and for any pair of multi-index $\alpha,\beta$, $a\in C^\infty(\mathbb{R}^n\times\mathbb{R}^n)$ belongs to the symbol class $S^m$, if there exists a constant $C_{\alpha,\beta}$ such that \begin{equation}
    |\partial_x^\alpha\partial_\xi^\beta a(x,\xi)|\leq C_{\alpha,\beta}\langle\xi\rangle^{m-|\beta|},
\end{equation}where $\langle\xi\rangle=(1+|\xi|^2)^{\frac{1}{2}}$. For a symbol $a\in S^m (\mathbb{R}^{2n})$, we denote $\mathrm{Op}_h(a)$ as its semiclassical quantization, defined by\begin{equation}
    (\mathrm{Op}_h(a)u)(x) = \frac{1}{(2\pi h)^n} \iint e^{\frac{i}{h}(x-y)\cdot\xi} a(x, \xi) u(y)  dy d\xi .
\end{equation}This definition is extended to the manifold $M$ via a partition of unity together with the compatibility of local coordinate charts. We can refer to \cite[Chapter 4]{SemianaZM} for more details.

Then, we define the notion of the semiclassical defect measure. Let $u_j$ be a family of eigenfunctions of $-\Delta$ with eigenvalues $h_j^{-2}\to \infty$, that is \begin{equation}
    (-h_j^2\Delta-1)u_j=0, \ \|u_j\|_{L^2}=1, \ h_j>0, \ h_j\to 0 \ as \ j\to \infty.
\end{equation}We say that $u_j$ converge semiclassically to probability measure $\mu$ on $T^*M$, which is called the semiclassical defect measure \cite[Theorem 5.2]{SemianaZM}, if \begin{equation}
    \langle\mathrm{Op}_{h_j}(a)u_j,u_j\rangle\to \int_{T^*M}ad\mu \ as \ j\to \infty,\ for \ all \ a\in C_c^\infty(T^*M).
\end{equation}
By standard results in semiclassical analysis (see for instance \cite[Chapter 5]{SemianaZM}), any such limit measure $\mu$ is a Radon probability measure. Furthermore, the semiclassical defect measure $\mu$ is supported on the unit cotangent bundle $S^*M$ and is invariant along the geodesic flow.

Given $f,g\in C^\infty (\mathbb{R}^{2n})$, we define Hamiltonian vector fields which generate the dynamics, \begin{equation}
    H_f=\langle\partial_\xi f,\partial_x\rangle-\langle\partial_x f,\partial_\xi\rangle.
\end{equation}Thus, Poisson bracket can be defined as \begin{equation}
    \{f,g\}:=H_fg.
\end{equation}We can refer to \cite[Section 2.4]{SemianaZM}. For $p(x,\xi)=|\xi|^2_g-1$, we have that Hamiltonian vector fields generate the geodesic flow $\Phi_t$.

The semiclassical Fourier transform and inverse semiclassical Fourier transform in time is given by \begin{equation}
    \begin{aligned}
        \mathcal{F}_h[w(t)]=\hat{w}_h(\tau)=\frac{1}{2\pi h}\int e^{-it\tau/h}w(t)dt, \\
    \mathcal{F}^{-1}_h[w(t)]=\check{w}_h(\tau)=\int e^{it\tau/h}w(t)dt.
    \end{aligned}
\end{equation}
For the sake of simplicity in notation, we set $P_V=-h^2\Delta+h^2V$ and $P_V^n=-h_n^2\Delta+h_n^2V$.

\subsection{Spectral cluster estimate and resolvent estimate}

To proceed toward the result, we first consider such a eigenvalue equation 
\begin{align}\label{characteristicequ}
    (-h^2\Delta-1)u=0
\end{align} 
and give the spectral cluster estimates of the eigenfunctions.

To better understand the significance of our spectral cluster estimates, we recall the classical $L^p$ estimates for spectral clusters on general $n$-dimensional compact Riemannian manifolds established by Sogge \cite[Proposition 2.1]{sogge} which will be used during the proof. 
\begin{theorem}\label{sogge}
    For a general $n$-dimensional smooth compact manifold without boundary $M$, $n\geq 2$ the universal estimate states that
\begin{equation} \label{eq:sogge}
\|\Pi_{[\lambda, \lambda+1]}(-\Delta) u\|_{L^p(M)} \lesssim \lambda^{\delta(p)} \|\Pi_{[\lambda, \lambda+1]}(-\Delta) u\|_{L^2(M)},
\end{equation}
where the optimal exponent is given by
\begin{equation}\label{soggeexp}
        \delta(p)=\begin{cases}
            \frac{n-1}{2}(\frac{1}{2}-\frac{1}{p}), \ &2\leq p\leq \frac{2(n+1)}{n-1} \\
            n(\frac{1}{2}-\frac{1}{p})-\frac{1}{2}, \ & \frac{2(n+1)}{n-1}\leq p\leq \infty
        \end{cases}.
    \end{equation}
\end{theorem}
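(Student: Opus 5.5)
The statement to prove is Sogge's spectral cluster bound (Theorem~\ref{sogge}). I would follow Sogge's original strategy: replace the sharp projector $\Pi_{[\lambda,\lambda+1]}(-\Delta)$ by a smoothed operator and reduce everything to a $TT^*$ argument. Write $P=\sqrt{-\Delta}$. Pick $\rho\in\mathcal{S}(\mathbb{R})$ with $\rho(0)=1$ and $\hat\rho$ supported in $(-\epsilon,\epsilon)$, where $\epsilon$ is smaller than the injectivity radius of $M$. Set $\chi_\lambda=\rho(P-\lambda)$. Since $|\rho|\gtrsim 1$ on $[0,1]$ and $\chi_\lambda$ is invertible on the range of $\Pi_{[\lambda,\lambda+1]}$, it suffices to prove
\[
\|\chi_\lambda f\|_{L^p}\lesssim\lambda^{\delta(p)}\|f\|_{L^2}.
\]
We then apply the result to $f=\rho(P-\lambda)^{-1}\Pi_{[\lambda,\lambda+1]}u$, whose $L^2$ norm is controlled by $\|\Pi_{[\lambda,\lambda+1]}u\|_{L^2}$.

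\textbf{Step 1: parametrix for the kernel.} Write
\[
\chi_\lambda=\frac{1}{2\pi}\int\hat\rho(t)e^{-it\lambda}e^{itP}\,dt.
\]
For $|t|<\epsilon$ the wave group $e^{itP}$ has a Lax--H\"ormander parametrix as a Fourier integral operator. Stationary phase in the radial frequency variable then gives, modulo an $O(\lambda^{-N})$ smoothing error,
\[
\chi_\lambda(x,y)=\lambda^{\frac{n-1}{2}}\sum_\pm a_\pm(\lambda;x,y)\,e^{\pm i\lambda d_g(x,y)}.
\]
Here the amplitudes are supported in $d_g(x,y)\le\epsilon$ and satisfy $|\partial^\alpha a_\pm|\lesssim d_g(x,y)^{-\frac{n-1}{2}-|\alpha|}$ for $d_g(x,y)\ge\lambda^{-1}$. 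The kernel is $O(\lambda^n)$ for $d_g(x,y)\le\lambda^{-1}$. The phase $d_g$ satisfies the Carleson--Sj\"olin condition because $M$ is Riemannian and $\epsilon$ is below the injectivity radius.

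\textbf{Step 2: endpoints.} For $p=\infty$, Cauchy--Schwarz together with the local Weyl law $\sum_{\lambda_j\in[\lambda,\lambda+1]}|\psi_j(x)|^2\lesssim\lambda^{n-1}$ gives the exponent $\frac{n-1}{2}=\delta(\infty)$. This local Weyl law itself follows from the bound on $\chi_\lambda\chi_\lambda^*(x,x)$ in Step~1.

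For $p_c=\frac{2(n+1)}{n-1}$, use $TT^*$: it suffices to show
\[
\|\chi_\lambda\chi_\lambda^*\|_{L^{p_c'}\to L^{p_c}}\lesssim\lambda^{2\delta(p_c)}=\lambda^{\frac{n-1}{n+1}}.
\]
Split the kernel of $\chi_\lambda\chi_\lambda^*$, which has the same oscillatory form, dyadically as $\sum_k T_k$, where $T_k$ is localized to $d_g(x,y)\sim2^k\lambda^{-1}$.
\begin{itemize}[leftmargin=2em]
\item From the kernel bound, $\|T_k\|_{L^1\to L^\infty}\lesssim\lambda^{\frac{n-1}{2}}(2^k\lambda^{-1})^{-\frac{n-1}{2}}$.
\item From the Carleson--Sj\"olin/Stein oscillatory integral theorem after rescaling (equivalently, H\"ormander's $L^2$ bound for nondegenerate oscillatory operators in $n-1$ variables after freezing one variable), $\|T_k\|_{L^2\to L^2}\lesssim\lambda^{-1}2^k$.
\end{itemize}
Interpolating the two bounds at $p_c$ makes the powers of $2^k$ cancel exactly, which is the criticality of $p_c$. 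A plain sum over $k$ then gives an extra $\log\lambda$.

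\textbf{Main obstacle.} Removing this logarithm at $p_c$ is the hardest step. I would handle it either with Stein's complex interpolation, embedding the kernel in the analytic family $d_g(x,y)^{z}e^{i\lambda d_g}$ as in the proof of the Stein--Tomas theorem, or with a Bourgain-type interpolation trick applied to the restricted weak-type bounds. Either route yields the strong-type estimate at $p_c$.

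\textbf{Step 3: remaining ranges.} For $2\le p\le p_c$, interpolate (Riesz--Thorin) between the trivial $L^2$ bound and the $p_c$ bound; this gives the linear exponent $\frac{n-1}{2}(\frac12-\frac1p)$. For $p_c\le p\le\infty$, interpolate between $p_c$ and $\infty$; this gives $n(\frac12-\frac1p)-\frac12$, and the two formulas agree at $p_c$. Finally, undoing the smoothing and passing from $\chi_\lambda$ back to $\Pi_{[\lambda,\lambda+1]}$ as explained at the start yields \eqref{eq:sogge}.
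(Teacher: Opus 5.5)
The paper gives no proof of this statement; it quotes Sogge's result and uses it as a black box. Your outline follows the standard route: the smoothed cluster operator $\chi_\lambda=\rho(P-\lambda)$, the parametrix kernel $\lambda^{\frac{n-1}{2}}a_\pm e^{\pm i\lambda d_g}$, $TT^*$ with a dyadic decomposition in $d_g$, then Riesz--Thorin. Your exponent bookkeeping is right: at $p_c$ the powers of $2^k$ cancel, and interpolating with the $L^2$ and $L^\infty$ bounds gives both branches of $\delta(p)$.

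However, the near-diagonal bound in Step~1 is wrong. For $d_g(x,y)\le\lambda^{-1}$ the kernels of $\chi_\lambda$ and of $\chi_\lambda\chi_\lambda^*=|\rho|^2(P-\lambda)$ are $O(\lambda^{n-1})$, not $O(\lambda^{n})$. This is what the oscillatory formula gives at $d_g\sim\lambda^{-1}$, and on the diagonal the kernel is $\sum_j|\rho(\lambda_j-\lambda)|^2|\psi_j(x)|^2\approx\lambda^{n-1}$. With $O(\lambda^n)$ as written, your $L^\infty$ endpoint would only give $\lambda^{n/2}$. The $k=0$ piece of the $TT^*$ sum would also contribute $\lambda^{2n/(n+1)}$ instead of $\lambda^{(n-1)/(n+1)}$. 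Step~2 tacitly uses the correct bound, so Step~1 must be stated that way.

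The more serious issue is the endpoint $p_c$, where all the difficulty sits; both remedies you name are only gestured at.
\begin{itemize}
\item Bourgain's summation trick, applied to $\sum_k T_k$ with your $L^1\to L^\infty$ and $L^2\to L^2$ bounds, yields only restricted weak type $L^{p_c',1}\to L^{p_c,\infty}$. Via $TT^*$ this gives only $\chi_\lambda\colon L^2\to L^{p_c,\infty}$. To reach strong type you need restricted weak-type bounds at two off-diagonal exponent pairs on a segment through $(1/p_c',1/p_c)$, which requires further mixed-norm bounds on the $T_k$, and then real interpolation.
\item For Stein's complex interpolation, the $L^2$ bound on the critical line cannot come from summing H\"ormander's bounds over the dyadic pieces of $d_g^{z}e^{i\lambda d_g}$, since that sum is precisely the $\log\lambda$. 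You have to realize the analytic family as functions of $P$, for instance by inserting a Gamma-normalized factor $|t|^{w}$ into the $t$-integral representing $|\rho|^2(P-\lambda)$. Then the spectral theorem supplies the uniform $L^2$ bound.
\end{itemize}

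The classical argument, as in Sogge's proof, avoids this altogether. Rather than applying $TT^*$ to the whole operator, apply Stein's $L^2(\mathbb{R}^{n-1})\to L^{p_c}(\mathbb{R}^n)$ oscillatory integral theorem, with one variable frozen, directly to the dyadic pieces of $\chi_\lambda$ itself. After rescaling, the piece where $d_g\sim r=2^k\lambda^{-1}$ has $L^2\to L^{p_c}$ norm $\lesssim\lambda^{\delta(p_c)}r^{1/2}$. These bounds sum geometrically up to $r\sim\epsilon$, so no logarithm appears and the endpoint subtlety is absorbed into Stein's theorem.
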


\begin{remark}
    In the following proof, we need $1-4\delta(p)>0$. Since \eqref{soggeexp}, we can get $p<8$. The stronger restriction $2\leq p< 4$ ensures the perturbation $Vu$ remains well-defined in $L^2$. Thus, our assumption is rational.
\end{remark}


Beyond the global estimates on the full manifold, another central theme in spectral geometry is the behavior of eigenfunctions restricted to submanifolds. In their landmark paper, Burq, G\'{e}rard, and Tzvetkov \cite{BGT} initiated the study of restriction estimates for Laplace--Beltrami eigenfunctions to submanifolds. In particular, for a compact Riemannian surface $M$ and a smooth curve $\gamma \subset M$,  Burq, G\'{e}rard, and Tzvetkov proved that the restriction of an eigenfunction $e_\lambda$ (with $-\Delta e_\lambda = \lambda^2 e_\lambda$) satisfies
\begin{equation}
\|e_\lambda\|_{L^q(\gamma)} \lesssim (1+\lambda)^{\delta_1(q)} \|e_\lambda\|_{L^2(M)},
\end{equation}
where $$\delta_1(q)=\begin{cases}
    \frac{1}{4},\ &2\leq q\leq 4, \\
    \frac{1}{2}-\frac{1}{q},\ &4\leq q\leq \infty,
\end{cases}$$ which is sharp on the standard sphere.

More recently, these restriction estimates have been generalized to rougher geometric settings, such as restrictions to fractal subsets or Borel measures. Following the pioneer work of Eswarathasan and Pramanik \cite{EP22} on fractal restrictions, Gao, Miao, and Xi \cite{GMX} established highly refined $L^p$ restriction estimates for eigenfunctions on general Riemannian surfaces to fractal subsets of Hausdorff dimension $\alpha \in (0,2]$. Their work beautifully unifies and refines both Sogge's global estimates (corresponding to $\alpha=2$) and BGT's curve restrictions (corresponding to $\alpha=1$). Let $\mu$ be an $\al$-dimensional probability measure on $M$. They have proved that \begin{equation}
    \|e_\lambda\|_{L^p(M;d\mu)} \lesssim \lambda^{\delta(\al,p)+\epsilon} \|e_\lambda\|_{L^2(M)}
\end{equation}where $$\delta(\al,p)=\begin{cases}
    \frac{1}{2}-\frac{\al}{p},\ &0<\al\leq \frac{1}{2},\ 2\leq p\leq \infty, \\
    \frac{1}{4},\ & \frac{1}{2}<\al\leq 1,\ 2\leq p\leq 4\al, \\
    \frac{1}{2}-\frac{\al}{p},\ &\frac{1}{2}<\al\leq 1,\ 4\al< p\leq \infty, \\
    \frac{1}{4}-\frac{\al-1}{2p},\ &1<\al\leq 2,\ 2\leq p\leq 2\al+2, \\
    \frac{1}{2}-\frac{\al}{p},\ &1<\al\leq 2,\ 2\al+2< p\leq \infty. \\
\end{cases}$$
Due to the worst-case scenarios on spherical-like geometries, their general bounds for $\alpha=2$ still carry the classical Sogge exponent of $\delta(2,4) = \frac{1}{8}$.

Also, it is worth to note that Sogge's estimate Theorem \ref{sogge} is sharp for the standard sphere $\mathbb{S}^2$ due to the extreme concentration of zonal harmonics around the poles \cite{sogge,kakeya}. However, it is not a sharp estimate in the presence of nonpositive curvature and has logarithmic improvements \cite{BSlog} \begin{equation}
    \|\Pi_{[\lambda,\lambda+(\log\lambda)^{-1}]}(-\Delta)\|_{L^2\to L^{p_c}}\lesssim\frac{\lambda^{1/p_c}}{(\log\lambda)^{\epsilon_0}}
\end{equation}
or even further improvements \cite{imsogge,SZL4,SZ2}\begin{equation}
    \|\phi_j\|_{L^4}\lesssim o(\lambda_j^{1/8})
\end{equation}
and \begin{equation}
     \|\Pi_{[\lambda,\lambda+(\log\lambda)^{-1}]}(-\Delta)\|_{L^2\to L^{p_c}}\lesssim\lambda^{1/p_c}(\log\log\lambda)^{-\frac{2}{(n+1)^2}},
\end{equation} where $p_c=\frac{2(n+1)}{n-1}$ and $\epsilon_0>0$. 

For the Schr\"odinger operator $H_V=-\Delta+V$ with singular potential, which corresponds to the operator $P_V=h^2H_V$ with $h=\lambda^{-1}$ in the semiclassical setting, we also have the similar bound as Theorem \ref{sogge} to the solutions of the characteristic equation $$(-\Delta+V)u=\lambda^2u.$$ Before presenting the conclusion, we first define the Kato class. For all $V\in L^1(M)$ satisfying $$\lim_{\delta\to 0}\sup_{x\in M}\int_{d_g(x,y)<\delta} |V(y)|W_n(d_g(x,y))dy=0$$where \begin{equation}
    W_n(r)=\begin{cases}
        r^{2-n},\, &n\geq 3, \\
        \log(2+r^{-1}),\, &n=2,
    \end{cases}
\end{equation}we claim $V\in \mathcal{K}(M)$. By the H\"older inequality, we have $L^{q}(M)\subset\mathcal{K}(M)\subset L^1(M)$ for all $q > \frac{n}{2}$.
\begin{theorem}[Theorem 1.1 in \cite{HWZ}]\label{sogge_disturb}
    Let $M$ be a compact manifold of dimension $n\geq 2$. If $V\in \mathcal{K}(M)\bigcap L^{\frac{n}{2}}(M)$, then we have for some constant $C_V$ independent of $\lambda$ \begin{equation}
        \|\Pi_{[\lambda,\lambda+1]}(H_V)u\|_{L^p}\leq C_V \lambda^{\delta(p)}\|\Pi_{[\lambda,\lambda+1]}(H_V)u\|_{L^2},
    \end{equation}where $\delta(p)$ is Sogge's exponent which we have stated in \eqref{soggeexp}.
\end{theorem}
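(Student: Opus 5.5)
Since this is Theorem 1.1 of \cite{HWZ}, cited rather than proved here, the plan is to reconstruct the standard perturbative route: transfer Sogge's bound from Theorem~\ref{sogge} to $H_V$ via resolvent bounds.

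\textbf{Step 1: reduce to a resolvent estimate.} Write $P=\sqrt{H_V}$, shifted as in the earlier remark so that the spectrum is positive. By a $TT^*$ argument together with a smooth spectral cutoff $\chi(P-\lambda)$ whose Fourier transform is compactly supported, the cluster bound $\|\Pi_{[\lambda,\lambda+1]}(H_V)\|_{L^2\to L^p}\lesssim\lambda^{\delta(p)}$ follows from
$$\|u\|_{L^p}\lesssim \lambda^{\delta(p)}\|u\|_{L^2}+\lambda^{\delta(p)-1}\|(H_V-\lambda^2)u\|_{L^2}$$
for $u$ spectrally localized near $\lambda$. This, in turn, will follow from the analogous quasimode estimate for $-\Delta$ combined with control of the error $Vu$.

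\textbf{Step 2: unperturbed estimates.} From Theorem~\ref{sogge} and Sogge's parametrix for $(-\Delta-(\lambda+i)^2)^{-1}$, I would derive the free quasimode bound
$$\|u\|_{L^p}\lesssim\lambda^{\delta(p)}\|u\|_{L^2}+\lambda^{\delta(p)-1}\|(-\Delta-\lambda^2)u\|_{L^2},$$
together with the dual-type bound $\|u\|_{L^p}\lesssim\lambda^{\delta(p)}\|u\|_{L^2}+\|(-\Delta-\lambda^2)u\|_{L^{q}}$ for a suitable $q$ (for instance $q=p'$ near the endpoint $p=2(n+1)/(n-1)$). The resolvent at $\lambda+i$ is split into a local piece, which behaves like $(-\Delta+\lambda^2)^{-1}$ and is bounded via Sobolev and Hardy--Littlewood--Sobolev, and a remainder that is spectrally localized and handled by Sogge's estimate.

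\textbf{Step 3: treat $V$ as a perturbation.} Apply Step 2 with $(-\Delta-\lambda^2)u=(H_V-\lambda^2)u-Vu$. The term $\|Vu\|_{L^q}$ must be absorbed, which I would do by splitting $V=V_{\le N}+V_{>N}$. The bounded part $V_{\le N}$ contributes $N\|u\|_{L^2}$ times a negative power of $\lambda$, which is harmless for large $\lambda$. For the unbounded part, $V\in L^{n/2}$ has small norm $\|V_{>N}\|_{L^{n/2}}$ for large $N$, and the local (uniform Sobolev type, Kenig--Ruiz--Sogge) piece of the resolvent maps $V_{>N}u$ back to $L^p$ with operator norm $\lesssim\|V_{>N}\|_{L^{n/2}}$, so it can be absorbed into the left side. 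The Kato condition is used for the part of the local resolvent kernel near the diagonal, where $|x-y|^{2-n}$ (or $\log$ when $n=2$) is integrated against $|V|$, as well as for self-adjointness and $L^p$ boundedness of $e^{-tH_V}$ and of the spectral projectors, so that heat-kernel (Gaussian) bounds for $H_V$ make the cutoffs $\chi(P-\lambda)$ legitimate on $L^p$.

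\textbf{Main obstacle.} The hardest step is Step 3 at the scaling-critical level $V\in L^{n/2}$. No power of $\lambda$ is gained, so the absorption must rely solely on the smallness of $\|V_{>N}\|_{L^{n/2}}$ together with uniform-in-$\lambda$ resolvent bounds $L^{r'}\to L^{r}$ with $1/r'-1/r=2/n$, which hold only on a restricted range of exponents. Interpolating these uniform resolvent bounds with the Sogge-type bounds, so that every $p\in[2,\infty]$ is covered, requires the splitting of the resolvent into a local part and a spectrally localized part with constants uniform in $\lambda$. I would attempt that splitting first. Near $p=\infty$ (where $n(\frac12-\frac1p)-\frac12$ applies), I would instead iterate via heat-kernel bounds, writing $u=e^{-\lambda^{-2}H_V}\tilde u$ for spectrally localized $u$ and using Kato-class Gaussian bounds.
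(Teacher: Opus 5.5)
The paper gives no proof of this statement. It is quoted from \cite{HWZ}, and the argument behind it is the Blair--Sire--Sogge perturbation scheme. Your architecture matches that scheme:
\begin{itemize}
\item reduce to a quasimode bound for $u=\Pi_{[\lambda,\lambda+1]}(H_V)u$, which is immediate since $\|(H_V-\lambda^2)u\|_{L^2}\le(2\lambda+1)\|u\|_{L^2}$, so no $TT^*$ is needed;
\item write $u=R_\lambda g-R_\lambda(Vu)$ with $R_\lambda=(-\Delta-(\lambda+i)^2)^{-1}$ and $g=(H_V-(\lambda+i)^2)u$;
\item split $R_\lambda=T_\lambda+S_\lambda$ into a piece with kernel supported in $d_g(x,y)<\varepsilon$ and a spectrally localized remainder, and split $V=V_{\le N}+V_{>N}$;
\item use $e^{-\lambda^{-2}H_V}$ with Kato-class heat kernel bounds to reach large $p$.
\end{itemize}
For $n\ge 3$ your absorption at $r=\frac{2n}{n-2}$ via the uniform $L^{\frac{2n}{n+2}}\to L^{\frac{2n}{n-2}}$ bound is sound. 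You must use it for all of $R_\lambda$, though: $S_\lambda$ is also only $O(1)$ there, so $V_{>N}$ has to be split off in that piece as well. The genuine problem is the absorption step when $n=2$, which is the only case this paper uses.

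For $n=2$ one has $L^{n/2}=L^1$, and the H\"older pairing $\frac1s-\frac1r=\frac2n=1$ forces $(s,r)=(1,\infty)$. The local resolvent kernel behaves like $\log(2+(\lambda d_g(x,y))^{-1})$, so $R_\lambda$ is not uniformly bounded $L^1\to L^\infty$. Hence there is no bound on $T_\lambda(V_{>N}u)$ in terms of $\|V_{>N}\|_{L^1}$. Your claim that absorption ``must rely solely on the smallness of $\|V_{>N}\|_{L^{n/2}}$'' is therefore false here. The Kato condition is not a side correction near the diagonal but the actual source of smallness, and the roles are the reverse of what you describe.

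The correct argument absorbs in $L^\infty$:
\begin{itemize}
\item Since $|T_\lambda(x,y)|\lesssim\log(2+d_g(x,y)^{-1})\mathbf{1}_{\{d_g(x,y)<\varepsilon\}}$ uniformly in $\lambda\ge1$, one has $\|T_\lambda(Vu)\|_{L^\infty}\le C\sup_x\int_{d_g(x,y)<\varepsilon}|V(y)|\log(2+d_g(x,y)^{-1})\,dy\,\|u\|_{L^\infty}$. This is small as $\varepsilon\to0$ precisely by the Kato condition.
\item The remainder satisfies $\|S_\lambda\|_{L^1\to L^\infty}\le C_\varepsilon$ uniformly in $\lambda$, so it is \emph{there} that you use $\|V_{>N}\|_{L^1}\to0$.
\item The bounded part $V_{\le N}$ costs only $C_{\varepsilon,N}\lambda^{-1/2}\|u\|_{L^2}$, and $\|R_\lambda g\|_{L^\infty}\lesssim\lambda^{-1/2}\|g\|_{L^2}\lesssim\lambda^{1/2}\|u\|_{L^2}$.
\end{itemize}
Choosing $\varepsilon$ first and then $N$ gives $\|u\|_{L^\infty}\lesssim\lambda^{1/2}\|u\|_{L^2}$; finiteness of $\|u\|_{L^\infty}$ is known a priori for Kato potentials. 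Finite $p$ then follow from $\sup_y\|R_\lambda(\cdot,y)\|_{L^p}\lesssim\lambda^{\delta(p)-1/2}$ applied to $Vu\in L^1$.

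Separately, drop your ``dual-type'' bound. First, $q=p'$ is incompatible with H\"older for $V\in L^{n/2}$: at $p=p_c$ one has $\frac1{p'}-\frac1p=\frac{2}{n+1}\neq\frac2n$. Second, an unweighted $\|(H_V-\lambda^2)u\|_{L^q}$ can be of size $\lambda\|u\|_{L^2}$ for a cluster, far above $\lambda^{\delta(p_c)}\|u\|_{L^2}$. The terms $R_\lambda g$ and $R_\lambda(Vu)$ must be estimated with two different mapping properties of $R_\lambda$: $L^2\to L^p$ with gain $\lambda^{\delta(p)-1}$ for the first, and an $L^s\to L^r$ bound (or the Kato bound in 2D) for the second.
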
For simplicity, we denote $\Pi_{[\lambda,\lambda+1]}(H_V)$ by $\Pi_{[\lambda,\lambda+1]}$ and $h^{-1}=\lambda$, where $\lambda^2$ is the eigenvalue of $H_V$, throughout the paper.

Moreover, we have a reasonable conjecture that we can improve Theorem \ref{sogge} to the $\lambda^\epsilon$ bound, which is difficult to prove and need some innovative microlocal techniques. Nevertheless, in our work, the universal bound provided by Theorem \ref{sogge} and Theorem \ref{sogge_disturb} is sufficient.

We now turn to the proof of the following resolvent estimate.
\begin{proposition}\label{resolwithV}
    For any $f\in L^{p'}$, $|\eta|\geq h$, $2\leq p<4$, $0<\epsilon \ll 1$ and $V\in L^\frac{p}{p-2}$, we have \begin{equation}
    \|(-h^2\Delta+h^2V-1-i\eta)^{-1}f\|_{L^p}\lesssim h^{-1-2\delta(p)-\epsilon}\|f\|_{L^{p'}}.
\end{equation}
\end{proposition}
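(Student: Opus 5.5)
We will prove the bound by expanding the resolvent in the spectral decomposition of $H_V$ and estimating each spectral window with Theorem~\ref{sogge_disturb} through a $TT^*$ argument. Since $V\in L^{\frac{p}{p-2}}$ with $\frac{p}{p-2}>1=\frac n2$ for $n=2$, we have $V\in\mathcal{K}(M)\cap L^1(M)$, so Theorem~\ref{sogge_disturb} applies. It gives
\[
\|\Pi_{[k,k+1]}\|_{L^2\to L^p}\lesssim k^{\delta(p)},
\]
and by duality and $TT^*$ (using $\Pi^2=\Pi$),
\[
\|\Pi_{[k,k+1]}\|_{L^{p'}\to L^p}\lesssim k^{2\delta(p)}.
\]
Writing $\lambda_j^2$ for the eigenvalues of $H_V$, the operator $(P_V-1-i\eta)^{-1}$ acts on the $j$-th eigenfunction by the multiplier $m(\lambda_j)=(h^2\lambda_j^2-1-i\eta)^{-1}$. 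We decompose
\[
(P_V-1-i\eta)^{-1}=\sum_{k\ge 0} m(H_V)\Pi_{[k,k+1]}
\]
and bound each block $L^{p'}\to L^p$.

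\textbf{Near the critical frequency.} For the windows with $|k-h^{-1}|\lesssim 1$, we have $|m|\le|\eta|^{-1}\le h^{-1}$ on the window. However, $m(H_V)\Pi_{[k,k+1]}$ is not simply $\sup|m|$ times $\Pi$ in $L^{p'}\to L^p$, since the $L^2$-based multiplier bound does not transfer directly. We factor through $L^2$:
\[
\|m(H_V)\Pi_k\|_{L^{p'}\to L^p}\le \|\Pi_k\|_{L^2\to L^p}\,\|m(H_V)\Pi_k\|_{L^2\to L^2}\,\|\Pi_k\|_{L^{p'}\to L^2}\lesssim k^{2\delta(p)}\sup_{[k,k+1]}|m|.
\]
This gives $h^{-1}h^{-2\delta(p)}$ for the $O(1)$ windows near $k\approx h^{-1}$, which is the main term.

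\textbf{Away from the critical frequency.} For $|k-h^{-1}|=j\ge 1$ with $k\lesssim 2h^{-1}$, we use
\[
|h^2\lambda^2-1|=h^2|\lambda-h^{-1}|(\lambda+h^{-1})\gtrsim h\,j,
\]
so the block is bounded by $(hj)^{-1}h^{-2\delta(p)}$. Summing over $1\le j\lesssim h^{-1}$ gives $h^{-1-2\delta(p)}\log(1/h)$, which is absorbed into $h^{-\epsilon}$. For $k\ge 2h^{-1}$, we have $|m|\lesssim h^{-2}k^{-2}$, and the sum $\sum_k k^{2\delta(p)-2}h^{-2}\lesssim h^{-1-2\delta(p)}$ converges because $2\delta(p)<1$; in dimension two, $\delta(p)=\tfrac12(\tfrac12-\tfrac1p)<\tfrac18$ for $p<4$. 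The finitely many low windows, including those near zero where we may use positivity after the shift in the Remark, contribute $O(1)$. The triangle inequality over windows then yields the claim.

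\textbf{Main obstacle.} The delicate point is the logarithmic loss from summing the $1/j$ tail. This is why the statement carries the $h^{-\epsilon}$. A sharper $TT^*$ argument using almost orthogonality of windows could remove it, but we do not need that. A second point to check is that Theorem~\ref{sogge_disturb} is stated for unit windows of $\sqrt{H_V}$, so the spectral decomposition must be indexed by $\lambda_j$ rather than by $\lambda_j^2$. This is consistent with the notation $\Pi_{[\lambda,\lambda+1]}(H_V)$ fixed after Theorem~\ref{sogge_disturb}.
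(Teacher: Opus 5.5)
Your argument is correct and gives the same bound $h^{-1-2\delta(p)}\log(1/h)$ as the paper, but the bookkeeping differs.

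The paper applies $TT^*$ once, globally. It writes the resolvent as a product of two half-powers and proves $\|(P_V-1-i\eta)^{-1/2}\|_{L^2\to L^p}\lesssim h^{-1/2-\delta(p)}(\log(1/h))^{1/2}$. For this it splits the spectrum into dyadic shells $|h^2\lambda_j^2-1|\in[2^{k-1}h,2^kh]$ and counts the roughly $2^k/\sqrt{1+2^kh}$ unit clusters in each shell. It then uses $L^2$ orthogonality through Cauchy--Schwarz, both inside a shell and across the $O(\log(1/h))$ shells, treating $E_j>0$ and $E_j<0$ separately.

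You instead apply $TT^*$ cluster by cluster, $\|\Pi_{[k,k+1]}\|_{L^{p'}\to L^p}\lesssim k^{2\delta(p)}$, and sum with the plain triangle inequality. The logarithm comes from the harmonic sum $\sum_j (hj)^{-1}$, and the tail $k\ge 2h^{-1}$ converges because $2\delta(p)<1$.

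Your version has several advantages:
\begin{itemize}
\item it is shorter;
\item it needs no branch choice for $(P_V-1\mp i\eta)^{-1/2}$;
\item it treats both sides of the energy shell at once;
\item by factoring each block through $L^2$, it correctly handles the fact that the multiplier is not constant on a window.
\end{itemize}
The paper's step $\|(P-i\eta)^{-1/2}\Pi'_kf\|_{L^p}\lesssim (2^kh)^{-1/2}\|\Pi'_kf\|_{L^p}$ glosses over exactly this last point. It is repaired the same way, by inserting the multiplier inside each cluster before applying Theorem~\ref{sogge_disturb}. What the paper's route gains is the intermediate $L^2\to L^p$ bound for the half-power of the resolvent.

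Two small remarks. First, define the near region as $|k-h^{-1}|\le 2$, say, so that $|\lambda-h^{-1}|\gtrsim j$ genuinely holds on every remaining window; with $j=1$ as written, a window can contain $h^{-1}$ itself. Second, your closing suggestion that almost orthogonality would remove the logarithm is not substantiated, since the paper's orthogonality argument still loses it. Nothing in your proof relies on that remark.
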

\begin{proof}
By a $TT^*$ argument and the identity 
$$
(-h^2\Delta+h^2V-1-i\eta)^{-1}=(-h^2\Delta+h^2V-1-i\eta)^{-\frac{1}{2}}((-h^2\Delta+h^2V-1+i\eta)^{-\frac{1}{2}})^*,
$$
it suffices to prove
\begin{equation}
    \|(-h^2\Delta+h^2V-1-i\eta)^{-\frac{1}{2}}f\|_{L^p}\lesssim h^{-1/2-\delta(p)-\epsilon}\|f\|_{L^2}.
\end{equation}

We set $P=-h^2\Delta+h^2V-1$ and  $E_j=h^2\lambda_j^2-1$, where $\lambda_j^2$ is the eigenvalue of $H_V=-\Delta+V$ corresponding to the eigenfunction $\phi_j$. For $f\in L^2$, we write 
$$
f=\sum_{j=0}^\infty\langle f,\phi_j\rangle\phi_j:=\sum_{j=0}^\infty f_j\phi_j.
$$
Then
$$
(P-i\eta)^{-\frac{1}{2}}f=\sum_{j=0}^\infty f_j(P-i\eta)^{-\frac{1}{2}}\phi_j=\sum_{j=0}^\infty f_j(E_j-i\eta)^{-\frac{1}{2}}\phi_j.
$$
We decompose 
\begin{equation}
    u:=(P-i\eta)^{-\frac{1}{2}}f=u_0+\sum_{k=0}^\infty(P-i\eta)^{-\frac{1}{2}}\Pi'_kf,
\end{equation}
where \begin{equation}
    \Pi'_kf=\sum_{|E_j|\in J_k}f_j\phi_j,
\end{equation} 
with $J_k=[2^{k-1}h,2^kh]$ and 
\begin{equation}
    u_0=\sum_{|E_j|<h/2}(P-i\eta)^{-\frac{1}{2}}f_j\phi_j.
\end{equation}

For the near resonance term $u_0$ and $|\eta|\geq h$,  Theorem~\ref{sogge_disturb} gives
\begin{equation}\label{nearbigh}
    \begin{aligned}
        \|u_0\|_{L^p}&\lesssim\|\sum_{|E_j|<h/2} f_j(E_j-i\eta)^{-\frac{1}{2}}\phi_j\|_{L^p}\lesssim h^{-\delta(p)}\|\sum_{\lambda_j\in [\frac{1}{h}-\frac{1}{2},\frac{1}{h}+\frac{1}{2}]} f_j(E_j-i\eta)^{-\frac{1}{2}}\phi_j\|_{L^2} \\
        &\lesssim |\eta|^{-\frac{1}{2}}h^{-\delta(p)}\|f\|_{L^2}\lesssim h^{-\frac{1}{2}-\delta(p)}\|f\|_{L^2}.
    \end{aligned}
\end{equation}

For the away from resonance term, applying Theorem~\ref{sogge_disturb} in the similar way and considering $E_j>0$ first, we obtain
\begin{equation}\label{farren}
    \begin{aligned}
        \|\sum_{k=0}^\infty(P-i\eta)^{-\frac{1}{2}}\Pi'_{k,+}f\|_{L^p} &\lesssim \sum_{k=0}^\infty (2^kh)^{-\frac{1}{2}}\|\Pi'_{k,+}f\|_{L^p} \\
        &\simeq\sum_{k=0}^\infty (2^kh)^{-\frac{1}{2}}\|\sum_{\lambda_j\in h^{-1}[\sqrt{2^{k-1}h+1},\sqrt{2^{k}h+1}]}f_j\phi_j\|_{L^p}\\
        &\lesssim \sum_{k=0}^\infty (2^kh)^{-\frac{1}{2}}h^{-\delta(p)}(\sqrt{2^{k}h+1})^{\delta(p)}(\frac{2^k}{\sqrt{2^{k}h+1}})^{1/2}\|\Pi'_kf\|_{L^2} \\
        &=h^{-\frac{1}{2}-\delta(p)}\sum_{k=0}^\infty (\sqrt{2^{k}h+1})^{-1/2+\delta(p)}\|\Pi'_kf\|_{L^2},
    \end{aligned}
\end{equation}where $-1/2+\delta(p)\leq -3/8<0$.

For $0\leq k\leq -\log_2h$, we have \begin{equation}\label{far1}
    \begin{aligned}
        \sum_{0\leq k\leq -\log_2h}(\sqrt{2^{k}h+1})^{-1/2+\delta(p)}\|\Pi'_kf\|_{L^2}&\leq (\sum_{0\leq k\leq -\log_2h}1)^{1/2}\|f\|_{L^2} \\
        &\lesssim (-\log_2h)^{1/2}\|f\|_{L^2}\lesssim h^{-\epsilon}\|f\|_{L^2}.
    \end{aligned}
\end{equation}For $k> -\log_2h$, we have \begin{equation}\label{far2}
    \begin{aligned}
        \sum_{k>-\log_2h}(\sqrt{2^{k}h+1})^{-1/2+\delta(p)}\|\Pi'_kf\|_{L^2}&\lesssim\sum_{k>-\log_2h}(\sqrt{2^{k}h+1})^{-3/8}\|\Pi'_kf\|_{L^2} \\
        &\lesssim (\sum_{k>-\log_2h}(\sqrt{2^{k}h+1})^{-3/4})^{1/2}\|f\|_{L^2} \\
        &\lesssim (\sum_{k=0}^\infty2^{-3k/8})^{1/2}\|f\|_{L^2}\lesssim\|f\|_{L^2}.
    \end{aligned}
\end{equation}

For $E_j<0$, we have $2^kh<1$, that is $0\leq k\leq -\log_2h$, which means we can estimate this part similarly to \eqref{far1}. When $2^kh\leq 1/2$, we obtain \begin{equation}\label{neg1}
    \begin{aligned}
        \|\sum_{k=0}^\infty(P-i\eta)^{-\frac{1}{2}}\Pi'_{k,-}f\|_{L^p} &\lesssim \sum_{k=0}^\infty (2^kh)^{-\frac{1}{2}}\|\Pi'_{k,-}f\|_{L^p} \\
        &\simeq\sum_{0\leq k\leq -\log_2h-1} (2^kh)^{-\frac{1}{2}}\|\sum_{\lambda_j\in h^{-1}[\sqrt{1-2^{k}h},\sqrt{1-2^{k-1}h}]}f_j\phi_j\|_{L^p}\\
        &\lesssim \sum_{0\leq k\leq -\log_2h-1} (2^kh)^{-\frac{1}{2}}h^{-\delta(p)}(\sqrt{1-2^{k}h})^{\delta(p)}(\frac{2^k}{\sqrt{1-2^{k}h}})^{1/2}\|\Pi'_kf\|_{L^2} \\
        &=h^{-\frac{1}{2}-\delta(p)}\sum_{0\leq k\leq -\log_2h-1} (\sqrt{1-2^{k}h})^{-1/2+\delta(p)}\|\Pi'_kf\|_{L^2} \\
        &\lesssim h^{-\frac{1}{2}-\delta(p)-\epsilon}\|f\|_{L^2}.
    \end{aligned}
\end{equation}

For $1/2<2^kh<1$, there is at most one $k$ satisfying the assumption. Then, for such $k$, we have \begin{equation}\label{neg2}
    \begin{aligned}
        &(2^kh)^{-\frac{1}{2}}\|\Pi'_kf\|_{L^p}\lesssim\sum_{m=0}^{[h^{-1}]+1}\|\Pi_{[m,m+1]}f\|_{L^p} \\
        \lesssim&h^{-\delta(p)}\sum_{m=0}^{[h^{-1}]+1}\|\Pi_{[m,m+1]}f\|_{L^2} \lesssim h^{-\delta(p)-1/2}\|f\|_{L^2}.
    \end{aligned}
\end{equation}

Combining \eqref{nearbigh}, \eqref{farren}, \eqref{far1}, \eqref{far2}, \eqref{neg1} and \eqref{neg2}, we complete the proof.
\end{proof}

\subsection{Unique continuation and low-frequency analysis}
To complement the high-frequency analysis conducted via semiclassical measures, we require the unique continuation property, as is shown by the following lemma, to handle the low-energy part of the spectrum.
\begin{lemma}[Strong Unique Continuation Property \cite{JKUCP}] \label{lem:ucp}
Let $M$ be a connected $2$-dimensional Riemannian manifold (not necessarily compact), and let $V \in L^p_{\mathrm{loc}}(M)$ for some $p > 1$. Suppose that $u \in H^2_{\mathrm{loc}}(M)$ satisfies the differential inequality
\begin{equation} \label{eq:gen-ucp-ineq}
    |\Delta u(x)| \leq |V(x) u(x)|
\end{equation}
almost everywhere on $M$. Then $u$ possesses the strong unique continuation property on $M$. In particular, if $u$ vanishes on a non-empty open subset $\Omega \subset M$, then $u \equiv 0$ identically on $M$.
\end{lemma}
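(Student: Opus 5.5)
The plan is to prove the strong unique continuation property with a Carleman inequality that has a singular power weight, after first putting the metric in conformal form. The statement is local plus a connectedness argument, so the first step is to localize. Near any point $x_0\in M$ we choose isothermal coordinates $z=x_1+ix_2$ on a disc $D$, in which $g=\rho(z)|dz|^2$ with $\rho$ smooth and positive. Then $\Delta_g=\rho^{-1}(\partial_{x_1}^2+\partial_{x_2}^2)=4\rho^{-1}\partial_z\partial_{\bar z}$, and the hypothesis \eqref{eq:gen-ucp-ineq} becomes $|\Delta_0 u|\le |W u|$ with $W=\rho V\in L^p(D)$, $p>1$. This is the point where two-dimensionality is used: the problem is reduced to the flat Laplacian with no loss of integrability of the potential.

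The core of the argument is a Carleman estimate of the following form. Fix $q$ with $\frac1p-\frac1q<1$, for instance $q=\infty$ if $p>1$ is close to $1$ and one uses the Sobolev-type embedding $\Delta_0^{-1}:L^p\to L^\infty$ in 2D. We want constants $C$ independent of $\tau$ such that, for all $v\in C_c^\infty(D\setminus\{0\})$ and all $\tau\in\mathbb{N}+\tfrac12$,
\begin{equation*}
\bigl\||z|^{-\tau}v\bigr\|_{L^q}\le C\,\bigl\||z|^{-\tau}\Delta_0 v\bigr\|_{L^p}.
\end{equation*}
To obtain it, we factor $\Delta_0=4\partial_z\partial_{\bar z}$. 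It then suffices to prove the first-order weighted estimates for $\partial_{\bar z}$ and $\partial_z$ separately, each with a constant uniform in $\tau$, and compose them with the right intermediate exponent; the gain of one derivative at each step is what makes the exponents compatible. For each factor we write $|z|^{-\tau}\partial_{\bar z}(|z|^{\tau}\cdot)$ and invert it by a modified Cauchy transform. Concretely, we take the Cauchy kernel and subtract its Taylor polynomial in $\zeta/z$ of degree $[\tau]$, which is the Bers--Vekua / Jerison--Kenig trick. The resulting kernel has the bound $|z-\zeta|^{-1}\min(|z|/|\zeta|,|\zeta|/|z|)^{1/2}$ uniformly in $\tau$, because $\tau$ stays at distance $\tfrac12$ from the integers. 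The mapping properties then follow from Hardy--Littlewood--Sobolev together with Schur's test on dyadic annuli.

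With the Carleman estimate available, the conclusion is the standard absorption argument. Suppose $u$ vanishes to infinite order at $0$ (in the $L^2$-average sense), which covers in particular vanishing on an open set. Choose $r$ so small that $C\|W\|_{L^p(B_{2r})}\le \tfrac12$. Apply the estimate to $v=\chi u$, where $\chi$ is a cutoff equal to $1$ on $B_r$. We also truncate near the origin with a second cutoff; its error terms vanish by the infinite-order vanishing once we use interior $W^{2,p}$ estimates for $u$, which hold because $\Delta_0 u\in L^p$ locally. The term $|z|^{-\tau}\chi\,\Delta_0 u$ is bounded by $|W|\,|z|^{-\tau}|v|$, and by H\"older it is absorbed into the left-hand side. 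What remains is
\begin{equation*}
\bigl\||z|^{-\tau}u\bigr\|_{L^q(B_{r/2})}\lesssim r^{-\tau}\,\bigl(\text{norm of } u \text{ on } B_{2r}\setminus B_r\bigr),
\end{equation*}
and letting $\tau\to\infty$ forces $u\equiv0$ on $B_{r/2}$.

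Finally, the set of points near which $u$ vanishes identically is open by construction. It is also closed, because any limit point of this set is a point of infinite-order vanishing and the local result applies there. Since $M$ is connected, this set is either empty or all of $M$, which gives the conclusion of the lemma. The main obstacle I expect is the uniform-in-$\tau$ Carleman estimate in the low-integrability range $p$ near $1$. There the first-order estimates have to be chained through exponents close to the endpoints, and the boundedness of the Taylor-corrected Cauchy kernel uniformly in $\tau$ is the delicate point. If this fails, I would first fall back on Jerison--Kenig's $n=2$ argument directly, namely a weighted $L^p\to L^\infty$ bound using $\tau\in\mathbb{N}+\tfrac12$.
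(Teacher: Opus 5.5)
Your top-level structure is the paper's. Isothermal coordinates turn the hypothesis into $|\Delta_0 u|\le |Wu|$ with $W=\rho V\in L^p_{\mathrm{loc}}$, and the paper then simply invokes Jerison--Kenig (Remark 6.7) for the flat case and asserts the local-to-global step.

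The gap is in the self-contained proof of the flat Carleman estimate that you add: the kernel bound $|z-\zeta|^{-1}\min(|z|/|\zeta|,|\zeta|/|z|)^{1/2}$ is false. Since $v$ vanishes near $0$, $\int \partial_{\bar\zeta}v\,\zeta^{-j-1}\,dA=0$, so you may add $z^j\zeta^{-j-1}$, $0\le j<m$, to the Cauchy kernel. This turns it into $\frac{(z/\zeta)^m}{z-\zeta}$, whose modulus after conjugation by $|z|^{\tau}$ is $\frac{(|z|/|\zeta|)^{m-\tau}}{|z-\zeta|}$. For any single $m$ this decays on one side of $|\zeta|=|z|$ and grows on the other.

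No admissible kernel does better. Take the mode $v=r^{\tau}b(r)e^{i\lfloor\tau\rfloor\theta}$ and let $g$ be the radial profile of $|z|^{-\tau}\partial_{\bar z}v$. The weighted inverse is then forced to be $b(r)=2r^{-1/2}\int_0^r\rho^{1/2}g(\rho)\,d\rho$. Put $t=\log r$, $G=r^{3/2}g$, $B=r^{1/2}b$, so that $\|g\|_{L^{4/3}}\simeq\|G\|_{L^{4/3}(dt)}$ and $\|b\|_{L^{4}}\simeq\|B\|_{L^{4}(dt)}$. The map becomes $B(t)=2\int_{-\infty}^t G$ on mean-zero $G$. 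This is unbounded $L^{4/3}\to L^4$: take $G=\mathbf{1}_{(-2L,-L)}-\mathbf{1}_{(-L,0)}$ and let $L\to\infty$. Your symmetric bound, together with HLS and Schur, would instead give boundedness exactly at this scale-critical pair $\frac1p-\frac1r=\frac12$.

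The route can be repaired, but the Taylor degree must be matched to the exponents. The dyadic-annulus sums must also use one-sided Hardy (Stein--Weiss) bounds rather than a symmetric Schur test.
\begin{itemize}
\item With $m=\lfloor\tau\rfloor$ (modulus $\frac{(|\zeta|/|z|)^{1/2}}{|z-\zeta|}$), the weighted $\partial_z^{-1}$ maps $L^p\to L^r$, $\frac1r=\frac1p-\frac12$, uniformly in $\tau\in\mathbb{N}+\frac12$, but only for $1<p<\frac43$.
\item With $m=\lceil\tau\rceil$, the weighted $\partial_{\bar z}^{-1}$ maps $L^r(D)\to L^\infty(D)$ for every $r>2$.
\end{itemize}
Chaining these gives $\||z|^{-\tau}v\|_{L^\infty}\lesssim\||z|^{-\tau}\Delta_0 v\|_{L^p}$ for $1<p<\frac43$. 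Larger $p$ reduce to this by H\"older on $D$, and your absorption argument then goes through. Alternatively, simply cite Jerison--Kenig as the paper does.

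Your closedness step also needs rewording. A boundary point $x_0$ of the vanishing set is not known a priori to be a point of infinite-order vanishing. Instead, note that the radius $r$ in your local argument depends only on the smallness of $\|W\|_{L^p(B_{2r}(y))}$, which is uniform for $y$ near $x_0$. Applying the local result at points $x_j\to x_0$ near which $u\equiv 0$ then gives $u\equiv 0$ on $B(x_j,r/2)\ni x_0$.
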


\begin{proof}
     Since the strong unique continuation property is local, the global result on the connected manifold $M$ follows from the local uniqueness on coordinate charts. On any coordinate chart, the Laplace--Beltrami operator $\Delta$ is a second-order elliptic operator with smooth coefficients. Under local conformal coordinates, the local metric is given by $g = e^{2\phi}(dx_1^2 + dx_2^2)$ where $\phi$ is a smooth, real-valued function \cite[Page 246]{Riemgeoana}. Consequently, the differential inequality \eqref{eq:gen-ucp-ineq} on $M$ reduces to the flat one in $\mathbb{R}^2$. More precisely, we have \begin{equation}
        \begin{aligned}
            |\Delta u|=|e^{-2\phi}\Delta_{\mathbb{R}^2} u|\leq |V(x) u(x)| \\
            \implies|\Delta_{\mathbb{R}^2} u|\leq |e^{2\phi}V(x) u(x)|,
        \end{aligned}
    \end{equation}where $e^{2\phi}V\in L^p_{\mathrm{loc}}(M)$ for $p>1$. Therefore, the lemma is a direct consequence of the local Euclidean strong unique continuation property established by Jerison and Kenig \cite[Remark 6.7]{JKUCP}.
\end{proof}

\section{Proof of the Theorems}\label{finalproof}

\subsection{Proof of stationary observability}\label{proofofsta}


Initially, we state the following theorem, which demonstrates the full-support property of the measure related to the eigenfunctions of $-\Delta$.
\begin{theorem}[{\cite[Theorem 1]{DJ2018}}]\label{fullsupp}
    Let $\mu$ be a semiclassical defect measure. Then, $\supp \mu=S^*M$, that is for every non-empty open set $\mathcal{U}\subset S^*M$ we have $\mu(\mathcal{U})>0$.
\end{theorem}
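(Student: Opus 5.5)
The statement is exactly \cite[Theorem 1]{DJ2018}, and the plan below follows the Dyatlov--Jin strategy. We argue by contradiction. Suppose $\mu(\mathcal U)=0$ for some non-empty open $\mathcal U\subset S^*M$, where $\mu$ is the semiclassical defect measure of a sequence $u_j$ with $(-h_j^2\Delta-1)u_j=0$ and $\|u_j\|_{L^2}=1$. Since $\mu$ is supported on $S^*M$, it suffices to show that, for a suitable $b\in C_c^\infty(T^*M)$ with $b\ge 0$, $b|_{S^*M}\not\equiv 0$ and $\supp b\cap S^*M\subset\mathcal U$, there is an estimate
$$\|u\|_{L^2}\le C\|\mathrm{Op}_h(b)u\|_{L^2}+C\frac{\log(1/h)}{h}\|(-h^2\Delta-1)u\|_{L^2}.$$
Indeed, applied to $u_j$ this gives $1\le C\|\mathrm{Op}_{h_j}(b)u_j\|$, while $\|\mathrm{Op}_{h_j}(b)u_j\|^2\to\int|b|^2\,d\mu=0$.

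\textbf{Step 1: partition and propagation.} Choose a partition of unity $1=a_1+a_2$ near $S^*M$. Take $a_1$ supported away from a smaller set $\mathcal U'\Subset\mathcal U$, and $a_2$ supported in $\mathcal U$. The dynamics is generated by $U(t)=e^{-it\Delta/2}$, which is quantum--classical conjugate to the geodesic flow $\Phi_t$. For words $w=w_0\cdots w_{N-1}$ of length $N\approx\rho\log(1/h)$, define the refined operators
$$A_w=A_{w_{N-1}}(N-1)\cdots A_{w_0}(0),\qquad A_k(t)=U(-t)\mathrm{Op}_h(a_k)U(t).$$
Do this both forward and backward in time, i.e.\ for $t\in[0,N]$ and $t\in[-N,0]$. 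Since $\rho<1$ stays below the Ehrenfest time, the symbols $a_w=\prod_k a_{w_k}\circ\Phi_k$ belong to an anisotropic calculus $S_{L,\rho}$ adapted to the weak stable/unstable foliations. In this calculus, products and quantizations are well behaved up to $O(h^{1-\rho-})$.

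\textbf{Step 2: split the words.} Split the words into two classes.
\begin{itemize}
\item $\mathcal Y$: words for which the fraction of indices with $w_k=2$ is at least some small $\alpha>0$.
\item $\mathcal X$: all remaining words.
\end{itemize}
For the sum over $\mathcal Y$, the eigenfunction equation lets us commute $U(t)$ past $u$, since $u$ is an eigenfunction. Each factor with $w_k=2$ then contributes the smallness of $\|\mathrm{Op}_h(a_2)u\|$. A summation argument gives
$$\Big\|\sum_{w\in\mathcal Y}A_wu\Big\|\lesssim \log(1/h)\,\|\mathrm{Op}_h(a_2)u\|+\frac{\log(1/h)}{h}\|(-h^2\Delta-1)u\|.$$
This uses a telescoping/Duhamel expansion in time of length $\log(1/h)$. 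For $\mathcal X$, set $A_{\mathcal X}^\pm=\sum_{w\in\mathcal X}A_w^\pm$ for the forward and backward compositions. The symbols of $A_{\mathcal X}^+$ and $A_{\mathcal X}^-$ are supported on points whose trajectories spend a fraction at most $\alpha$ of their time in $\mathcal U$. After conjugating to the model on $\mathbb H^2$, these sets are porous: in the stable direction for one family and in the unstable direction for the other, on scales from $h^\rho$ to $1$.

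\textbf{Step 3: fractal uncertainty principle (main obstacle).} This is the key step. It uses the Bourgain--Dyatlov fractal uncertainty principle for porous sets, transplanted to hyperbolic surfaces via the local straightening of the stable/unstable foliations by Fourier integral operators. The resulting estimate is
$$\|A_{\mathcal X}^-A_{\mathcal X}^+\|_{L^2\to L^2}\lesssim h^{\beta},$$
for some $\beta>0$ depending only on the porosity constants. Some loss is incurred because $\mathcal X$ contains exponentially many words, about $e^{c\alpha N}$ of them. Taking $\alpha$ small keeps this loss below $h^{-\beta/2}$, so the gain survives. Writing
$$u=A^-A^+u+\text{error},\qquad A^\pm=A^\pm_{\mathcal X}+A^\pm_{\mathcal Y},$$
we obtain $\|u\|\le Ch^{\beta/2}\|u\|$ plus the $\mathcal Y$ terms and the equation errors. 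Absorbing the first term for $h$ small gives the displayed estimate, and hence the contradiction.

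The delicate points are the long-time anisotropic calculus up to time $\rho\log(1/h)$ and the verification of porosity uniformly in $h$. The FUP itself I would take as a black box from Bourgain--Dyatlov.
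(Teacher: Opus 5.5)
Your reduction from the estimate to full support is correct, and it is all the paper does. The paper cites Dyatlov--Jin and remarks that Theorem~\ref{fullsupp} follows from Theorem~\ref{staest}, which is exactly your comparison of $\|\mathrm{Op}_{h_j}(b)u_j\|^2\to\int|b|^2\,d\mu=0$ with $1\le C\|\mathrm{Op}_{h_j}(b)u_j\|$. Had you invoked Theorem~\ref{staest} as a black box, you would be done.

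The trouble is in your sketch of that estimate, which as written does not deliver it. Your $\mathcal Y$-bound carries $\log(1/h)$ in front of $\|\mathrm{Op}_h(a_2)u\|$, and nothing later removes it. Absorbing $Ch^{\beta/2}\|u\|$ only gives $\|u\|\le C\log(1/h)\|\mathrm{Op}_h(b)u\|+C\frac{\log(1/h)}{h}\|(-h^2\Delta-1)u\|$. For eigenfunctions this says $\|\mathrm{Op}_{h_j}(b)u_j\|\ge c/\log(1/h_j)$. That is compatible with $\|\mathrm{Op}_{h_j}(b)u_j\|\to0$, since the defect measure gives no rate, so the contradiction fails.

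The logarithm comes from summing over the $N\approx\rho\log(1/h)$ positions; you must average instead. Each $w\in\mathcal Y$ has at least $\alpha N$ letters $2$, so $\sum_{w\in\mathcal Y}a_w\le\alpha^{-1}N^{-1}\sum_{k=0}^{N-1}a_2\circ\Phi_k$. The right side is the symbol of the time average $N^{-1}\sum_kA_2(k)$, whose action on $u$ costs only $\|\mathrm{Op}_h(a_2)u\|+C\frac{\log(1/h)}{h}\|(-h^2\Delta-1)u\|$. The anisotropic calculus then turns the pointwise inequality into $\|\sum_{w\in\mathcal Y}A_wu\|\le C\alpha^{-1}\|\mathrm{Op}_h(a_2)u\|+C\frac{\log(1/h)}{h}\|(-h^2\Delta-1)u\|+o(1)\|u\|$, with no logarithm on the control term.

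The $\mathcal X$-step is also misstated. The union $\bigcup_{w\in\mathcal X}\supp a_w^\pm$ consists essentially of the points whose orbits visit $\mathcal U'$ at fewer than $\alpha N$ of the times. This set is not porous at coarse scales. For example, if $\mathcal U$ misses a closed geodesic $\gamma$, every unstable segment of fixed length meets the weak stable leaf of $\gamma$ at points that enter $\mathcal U$ only boundedly often.

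What is porous on scales $h^\rho$ to $1$ is the support of each single $a_w^\pm$. This holds because both $a_1$ and $a_2$ vanish on open subsets of $S^*M$ (for $a_2$, after shrinking $\mathcal U$), and horocycle segments of fixed length meet every such set. So the fractal uncertainty principle is applied to each $A_w^-A_{w'}^+$ with $w,w'\in\mathcal X$, and the bounds are summed. This is where $\#\mathcal X\le h^{-\beta/4}$ for small $\alpha$ is really needed.

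You also need $\|A^-_{\mathcal X}\|\le C$ uniformly in $h$, which comes from $0\le\sum_{w\in\mathcal X}a_w^-\le1$ and the calculus. Any power loss there would multiply $\|A^+_{\mathcal Y}u\|$ in the cross term $A^-_{\mathcal X}A^+_{\mathcal Y}u$ and break the contradiction for the same reason as above.

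Finally, $U(t)$ must be the semiclassical propagator, e.g.\ $e^{ith\Delta/2}$. With $U(t)=e^{-it\Delta/2}$, $A_k(t)$ quantizes $a_k\circ\Phi_{\pm t/h}$, and Egorov fails long before time $\rho\log(1/h)$.
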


\begin{theorem}[{\cite[Theorem 2]{DJ2018}}]\label{staest}
    Assume that $a\in C_c^\infty(T^*M)$ and $a|_{S^*M}\not\equiv 0$. Then, there exist constants $C(a),\, h_0>0$ depending on $M$ and $a$ such  that for $0<h<h_0$ and all $u\in H^2(M)$ \begin{equation}\label{obfree}
        \|u\|_{L^2}\leq C(a)\| \mathrm{Op}_h(a) u \|_{L^2} + C(a)\frac{\log(1/h)}{h} \| (-h^2\Delta-1) u \|_{L^2}.
    \end{equation}
\end{theorem}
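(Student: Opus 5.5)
The plan follows Dyatlov--Jin's own argument rather than a compactness argument. A contradiction argument based on Theorem~\ref{fullsupp} only yields $\|u\|\le C\|\mathrm{Op}_h(a)u\|+o(h^{-1})\|(-h^2\Delta-1)u\|$. The sharper factor $\log(1/h)/h$ requires propagating up to times of order $\log(1/h)$ and a quantitative decay statement, which is where the fractal uncertainty principle enters.

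First I reduce to an energy-localized problem. Put $P=-h^2\Delta-1$ and fix $\chi\in C_c^\infty$ equal to $1$ near $S^*M$. By ellipticity of $P$ away from $S^*M$,
\begin{equation*}
\|(1-\mathrm{Op}_h(\chi))u\|\lesssim\|Pu\|.
\end{equation*}
So it suffices to treat $u$ microlocalized near $S^*M$. Since $a|_{S^*M}\not\equiv0$, choose an open set $\mathcal U\subset S^*M$ with $|a|>c>0$ on $\mathcal U$. Build a pseudodifferential partition of unity $I=A_1+A_2$ near $S^*M$, where $A_1$ is microsupported in $\{|a|>c/2\}$ and $A_2$ is microsupported away from a neighborhood of $\overline{\mathcal U}$. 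By elliptic estimates, $\|A_1u\|\lesssim\|\mathrm{Op}_h(a)u\|+O(h^\infty)\|u\|$. Using Egorov's theorem, take time-$1$ conjugates $A_w=A_{w_N}(N)\cdots A_{w_1}(1)$ over words $w$ of length $N$, where $A_j(t)=e^{itP/h}A_je^{-itP/h}$.

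The dynamical step uses Duhamel's formula:
\begin{equation*}
u-e^{-itP/h}u=\frac{i}{h}\int_0^te^{-isP/h}Pu\,ds.
\end{equation*}
This gives $\|A_j(t)u-A_ju\|\lesssim \|A_j\|\,\frac{|t|}{h}\|Pu\|$. Summing along a word of length $N$ and grouping words by where the symbol $a_1$ first appears, I get
\begin{equation*}
\|u\|\le \Bigl\|\sum_{w\in\{2\}^N}A_wu\Bigr\|+C N\|A_1u\|+C\frac{N}{h}\|Pu\|.
\end{equation*}
Here $A_w$ for $w\in\{2\}^N$ is the operator $A_2(N)\cdots A_2(1)$, and the middle term uses a loss that stays polynomial in $N$, which is harmless after choosing the clustering properly as in Dyatlov--Jin. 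Taking $N=\rho\log(1/h)$ yields the factor $\log(1/h)/h$ in front of $\|Pu\|$.

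The remaining and hardest step is the key estimate
\begin{equation*}
\|A_2(N)\cdots A_2(1)\|_{L^2\to L^2}\le Ch^{\beta}, \qquad N=\rho\log(1/h),
\end{equation*}
for some $\beta>0$, after which this term is absorbed into the left-hand side for small $h$. For this I would split $N$ into a forward and a backward part, each of length up to about one Ehrenfest time, and symbolize them in the anisotropic calculi adapted to the unstable and stable foliations, respectively. The symbols of the two products are localized in $h^{\rho}$-neighborhoods of sets that are porous along the unstable and stable directions, because trajectories avoiding $\mathcal U$ for long times form such porous sets on the hyperbolic surface. The fractal uncertainty principle of Bourgain--Dyatlov for porous sets then gives the gain $h^\beta$. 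This last step is where essentially all the difficulty lies, and I would import it from \cite{DJ2018}.
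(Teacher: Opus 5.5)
The paper gives no proof of this statement; it is quoted from Dyatlov--Jin, so the comparison is with their argument. Your architecture (energy cutoff, two-symbol partition, propagated words, fractal uncertainty for the uncontrolled part) is theirs, but your reduction step fails as stated.

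A small point first: Duhamel controls norms, $\|A_j(t)u\|=\|A_je^{-itP/h}u\|\le\|A_ju\|+C|t|h^{-1}\|Pu\|$. The vectors $A_j(t)u\approx\mathrm{Op}_h(a_j\circ\varphi_t)u$ and $A_ju$ themselves are not close.

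More seriously, take $2\cdots2$ as the only uncontrolled word and group the others by the latest time $k$ carrying the letter $1$. Then
\[
\|u\|\le\|A_2(N)\cdots A_2(1)u\|+\sum_{k=1}^{N}\|A_2(N)\cdots A_2(k+1)A_1(k)u\|\le\|A_2(N)\cdots A_2(1)u\|+CN\|A_1u\|+\frac{CN^2}{h}\|Pu\|.
\]
For $N\sim\log(1/h)$ this loses $\log(1/h)$ on $\|\mathrm{Op}_h(a)u\|$, and puts $\log^2(1/h)/h$, not $\log(1/h)/h$, on $\|Pu\|$. The first loss is not harmless. In \eqref{obfree} the constant $C(a)$ is $h$-independent, and this paper needs that: in Proposition~\ref{semiob} an extra $\log(1/h)$ on the observation term would break the dyadic summation in the proof of Theorem~\ref{obser}.

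No regrouping of the same words removes the loss. The symbol of $I-A_2(N)\cdots A_2(1)$ is $1-\prod_j a_2\circ\varphi_j$. It equals $1$ on a trajectory meeting $\{a_1=1\}$ once and otherwise lying in $\{a_2=1\}$, where $\frac1N\sum_j a_1\circ\varphi_j=\frac1N$. So while only $2\cdots2$ is uncontrolled, the controlled part is bounded by a sum of the $A_1(j)$, never an average.

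The missing idea is Dyatlov--Jin's density splitting. Use blocks of short logarithmic length $n\approx c\log(1/h)$, so that all $a_w$ stay in an isotropic class $S_\delta$ with $\delta<1/2$. Call a block controlled if at least a fraction $\alpha$ of its letters are $1$. For the controlled family, $a_{\mathcal X}\le\frac{1}{\alpha n}\sum_{j<n}a_1\circ\varphi_j$ is an average, and the calculus gives $\|A_{\mathcal X}u\|\le C_\alpha\bigl(\|A_1u\|+\frac{n}{h}\|Pu\|\bigr)$ with no loss.

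The full time, about $2\log(1/h)$, is covered by $O(1)$ blocks, so grouping by first occurrence runs over $O(1)$ blocks, not $N$ letters. Each half is nearly one Ehrenfest time, so your $N=\rho\log(1/h)$ needs $\rho$ close to $2$.

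The price is that the uncontrolled family $\mathcal Y$ is no longer one word. It is every word whose blocks all have density of $1$'s below $\alpha$: up to $h^{-\varepsilon(\alpha)}$ words with $\varepsilon(\alpha)\to0$, most containing the letter $1$. The fractal uncertainty step must therefore give $\|A_w\|\le Ch^\beta$ uniformly over all words, with $\beta$ independent of $\alpha$; then one fixes $\alpha$ with $\varepsilon(\alpha)<\beta$.

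This needs porosity at every time step, including those with $w_j=1$. So both $a_1$ and $a_2$ must vanish on nonempty open subsets of $S^*M$. That is available here, since otherwise $a$ is elliptic on $S^*M$ and the estimate is trivial. Your setup imposes it only on $a_2$. The single-word bound $\|A_2(N)\cdots A_2(1)\|\le Ch^\beta$ that you propose to import does not close the argument.
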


It is obvious that Theorem \ref{fullsupp} can be deduced from Theorem \ref{staest}. To use the full-support property, we need to expand Theorem \ref{staest} to case of $-\Delta+V$ for $V\in L^{\frac{p}{p-2}},\, 2\leq p<4$.

Before expanding the theorem, we state the following important conclusion, the invariance of the semiclassical measure, which has been proved in \cite[Theorem 5.4]{SemianaZM} first. In the following, we provide a brief proof to facilitate the reader's understanding in our setting.
\begin{theorem}\label{Liederiv}
    For a family of $u_n$ that satisfies $\|u_n\|_{L^2}=1$ and $(P^n_V-1)u_n=o(h_n)$, $\mu$ is the semiclassical defect measure corresponding to $u_n$, where $h_n\to 0, \ \text{as}\ n\to \infty$. We have 
    \begin{equation}
        \mathcal{L}_{H_p}\mu=0,
    \end{equation}where $H_p$ represents the Hamiltonian vector field generated by the homogeneous geodesic flow $\Phi_t$.
\end{theorem}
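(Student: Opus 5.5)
The plan is the standard commutator argument, with one extra step: show that the potential term drops out in the limit. Fix a real symbol $a\in C_c^\infty(T^*M)$ and set $A_n=\mathrm{Op}_{h_n}(a)$, which we may take self-adjoint modulo $O(h_n)$ errors. Write $P^n_V-1=(-h_n^2\Delta-1)+h_n^2V$. Then
\begin{equation*}
\frac{i}{h_n}\langle [P^n_V,A_n]u_n,u_n\rangle=\frac{i}{h_n}\Big(\langle A_nu_n,(P^n_V-1)u_n\rangle-\langle A_n(P^n_V-1)u_n,u_n\rangle\Big),
\end{equation*}
and this tends to zero because $(P^n_V-1)u_n=o(h_n)$ in $L^2$ and $A_n$ is uniformly bounded on $L^2$. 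This uses that $P^n_V$ is self-adjoint with domain $H^2$ and that $u_n\in H^2$, which we assume.

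Next we split the commutator into a free part and a potential part. For the free part, the semiclassical symbol calculus gives
\begin{equation*}
\frac{i}{h_n}[-h_n^2\Delta,A_n]=\mathrm{Op}_{h_n}(\{|\xi|_g^2,a\})+O_{L^2\to L^2}(h_n).
\end{equation*}
Hence $\frac{i}{h_n}\langle[-h_n^2\Delta,A_n]u_n,u_n\rangle\to\int H_pa\,d\mu$. The remaining task is to prove
\begin{equation*}
h_n\langle [V,A_n]u_n,u_n\rangle\to 0 .
\end{equation*}

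This potential term is the main obstacle, since $V$ is only in $L^{p/(p-2)}$. It suffices to show $h_n|\langle V A_nu_n,u_n\rangle|+h_n|\langle Vu_n,A_n^*u_n\rangle|\to0$. By H\"older with exponents $\frac{p-2}{p}+\frac1p+\frac1p=1$, each term is bounded by $h_n\|V\|_{L^{p/(p-2)}}\|A_nu_n\|_{L^p}\|u_n\|_{L^p}$. So we need $L^p$ bounds on quasimodes of width $o(h_n)$.

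We obtain these from Theorem~\ref{sogge_disturb}. Split $u_n=\Pi u_n+(1-\Pi)u_n$, where $\Pi$ projects onto $\lambda_j\in[h_n^{-1}-1,h_n^{-1}+1]$. On the range of $1-\Pi$ we have $|h_n^2\lambda_j^2-1|\gtrsim h_n$, so the quasimode condition gives $\|(1-\Pi)u_n\|_{L^2}=o(1)$. More precisely, a dyadic decomposition in $|E_j|$, exactly as in the proof of Proposition~\ref{resolwithV}, should give
\begin{equation*}
\|u_n\|_{L^p}\lesssim h_n^{-\delta(p)-\epsilon}\big(\|u_n\|_{L^2}+h_n^{-1}\|(P^n_V-1)u_n\|_{L^2}\big)\lesssim h_n^{-\delta(p)-\epsilon}.
\end{equation*}
The same bound should hold for $A_nu_n$. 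The reason is that $A_n$ nearly commutes with spectral localization: the commutator $[P_V^n,A_n]$ is $O(h_n)$ from $H^2$ to $L^2$ up to the $V$-term, which can itself be controlled by Sobolev embedding. If that commutator step is too delicate, the fallback is to use Sobolev embedding directly: since $\|u_n\|_{H^1_{h}}\lesssim1$, we get $\|u_n\|_{L^p}\lesssim h_n^{-(1-2/p)}$, and this is slightly worse than $h_n^{-\delta(p)}$.

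Granting the sharper bound, the potential term is
\begin{equation*}
O\big(h_n^{1-2\delta(p)-2\epsilon}\big)\to0,
\end{equation*}
because $\delta(p)=\frac12(\frac12-\frac1p)<\frac18$ for $p<4$. Even the fallback exponent $h_n^{1-2(1-2/p)}=h_n^{4/p-1}$ tends to zero for $p<4$, so the argument closes either way, and this is precisely where the restriction $p<4$ enters.

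Combining the three steps gives $\int H_pa\,d\mu=0$ for all $a\in C_c^\infty(T^*M)$, which is $\mathcal{L}_{H_p}\mu=0$. Finally, $\mu$ is supported on $S^*M$, by ellipticity of $|\xi|_g^2-1$ away from the sphere. The same H\"older bound shows that the $V$-term is negligible there too, so the flow of $H_p$ restricted to $S^*M$ is the geodesic flow $\Phi_t$.
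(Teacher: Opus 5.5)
Your proof is correct. Its skeleton matches the paper's: the commutator identity killed by the $o(h_n)$ quasimode error, symbol calculus for $\frac{i}{h_n}[-h_n^2\Delta,A_n]$, and H\"older with $\frac{p-2}{p}+\frac1p+\frac1p=1$ on the potential term. The difference is in how you obtain the $L^p$ bounds.

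The paper writes $(P^n_V-1-i\eta)u_n=f_n-i\eta u_n$ with $\eta\sim h_n$ and applies the $L^{p'}\to L^p$ resolvent bound of Proposition~\ref{resolwithV}. Because the $L^2$-small error is pushed through $L^{p'}$, this only gives $\|u_n\|_{L^p}\lesssim h_n^{-2\delta(p)-\epsilon}$ and a potential term of size $h_n^{1-4\delta(p)-2\epsilon}$. In exchange, it reuses the resolvent estimate that later drives the Strichartz bound.

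Your direct dyadic decomposition with Theorem~\ref{sogge_disturb} applied to $L^2$ data is the usual quasimode form of Sogge's bound, and it gives the sharper $h_n^{-\delta(p)}$. Your Sobolev fallback dispenses with spectral cluster estimates altogether. It shows that, for this statement, $p<4$ is exactly the semiclassical Sobolev threshold ($h_n^{4/p-1}\to0$), whereas the cluster-based bounds leave room to spare.

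Two small points.
\begin{enumerate}
\item The commutator argument you sketch for $A_nu_n$ is unnecessary. For $a\in C_c^\infty(T^*M)$, the kernel of $\mathrm{Op}_{h}(a)$ is $h^{-2}$ times a rapidly decaying function of $(x-y)/h$. Hence $\|A_nu_n\|_{L^p}+\|A_n^*u_n\|_{L^p}\lesssim\|u_n\|_{L^p}$ uniformly in $h_n$, which is what the paper uses.
\item The bound $\|u_n\|_{H^1_h}\lesssim1$ in your fallback deserves a line of proof. For example, $\|h_n\nabla u_n\|_{L^2}^2=1+o(h_n)-h_n^2\int V|u_n|^2$. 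The last term can be absorbed via Gagliardo--Nirenberg, since $h_n^2\|u_n\|_{L^p}^2\lesssim h_n^2+h_n^{4/p}\|h_n\nabla u_n\|_{L^2}^{2-4/p}$ and $2-4/p<2$.
\end{enumerate}
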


\begin{proof}

We first claim that
\begin{equation}\label{claim}
    \|u_n\|_{L^p}\lesssim O(h_n^{-2\delta(p)-\epsilon}).
\end{equation} 
By the assumption, we have
$$
(P_V^n-1-i\eta)u_n=o(h_n)-i\eta u_n:=f_n-i\eta u_n.
$$
Using Proposition~\ref{resolwithV}, we obtain
\begin{equation}
    \begin{aligned}
        \|u_n\|_{L^p}\lesssim &\; h_n^{-1-2\delta(p)-\epsilon}\|f_n-i\eta u_n\|_{L^{p'}}\lesssim h_n^{-1-2\delta(p)-\epsilon}(o(h_n)+\eta\|u_n\|_{L^2}) \\ 
        \lesssim&\; O(h_n^{-2\delta(p)-\epsilon}),
    \end{aligned}
\end{equation}
where, in the last inequality, we use the fact that the preceding estimate holds for all $\eta\gtrsim h_n$. Hence we may simply take $\eta=Ch_n$.

Next, we prove the following claim. To show that $\mathcal{L}_{H_p}\mu=0$, it is sufficient to prove the average of $\frac{i}{h_n}[P_V^n-1,A]$ vanishes, that is, $\lim\limits_{n\to \infty}\langle \frac{i}{h_n}[P_V^n-1,A]u_n,u_n\rangle=0$, where $A=\mathrm{Op}_h(a)$ for some $a\in C_c^\infty(T^*M)$ with $a|_{S^*M}\not\equiv 0$. We first demonstrate that the potential $V$ is negligible in the semiclassical limit when considered in the mean, and hence the principal symbol of $P_V-1$ is given by $|\xi|^2-1$. Indeed, since $1-4\delta(p)>0$ for all $2\leq p<4$,
\begin{equation}
    \begin{aligned}
       \left|\langle \frac{i}{h_n}[h_n^2V,A]u_n,u_n\rangle\right|= & \;\left|h_n\int V(Au_n\overline{u_n}-u_n\overline{A^*u_n}) \right|\\
        \lesssim&\; h_n\|V\|_{L^{\frac{p}{p-2}}}(\|Au_n\|_{L^p}\|\overline{u_n}\|_{L^p}+\|u_n\|_{L^p}\|A^*u_n\|_{L^p}) \\
        \lesssim & \;  h_n\|V\|_{L^\frac{p}{p-2}}\|u_n\|^2_{L^p}  \lesssim h_n^{1-4\delta(p)-2\epsilon}\|V\|_{L^\frac{p}{p-2}}\to 0,
    \end{aligned}
\end{equation}
as $n\to \infty.$

Setting $p(\xi)=|\xi|^2-1$, we have 
$$
\frac{i}{h_n}[P_V^n-1,A]= \mathrm{Op}_{h_n}(\{p,a\})+o(1).
$$
By the definition of the semiclassical defect measure, we have \begin{equation}\label{Hpa=0}
    0=\lim_{n\to \infty}\langle \frac{i}{h_n}[P_V^n-1,A]u_n,u_n\rangle=\int_{T^*M} \{p,a\}d\mu=\int_{T^*M} H_pa(x,\xi)d\mu,
\end{equation} 
where the last equality follows from the definition of the Poisson bracket. 

We now prove that
\begin{equation}
    \mathcal{L}_{H_p}\mu=0.
\end{equation}

By \cite[Page 105]{SemianaZM} and \eqref{Hpa=0}, the measure $\mu$ is invariant along the homogeneous geodesic flow $\Phi_t$. Therefore, by the definition of Lie derivation we have \begin{equation}
    \mathcal{L}_{H_p}\mu:=\frac{d}{dt}\bigg|_{t=0}(\Phi_t^*\mu)=\frac{d}{dt}\bigg|_{t=0}\mu=0.
\end{equation}

It remains to prove that $\lim\limits_{n\to \infty}\langle \frac{i}{h_n}[P_V^n-1,A]u_n,u_n\rangle=0$. Indeed,
\begin{equation}
    \begin{aligned}
        \left|\langle \frac{i}{h_n}[P_V^n-1,A]u_n,u_n\rangle\right|&= \left|\frac{i}{h_n}\Big(\langle Au_n,(P_V^n-1)u_n\rangle-\langle (P_V^n-1)u_n,A^*u_n\rangle\Big) \right|\\
        &\lesssim  \frac{1}{h_n}(\|Au_n\|_{L^2}\|(P_V^n-1)u_n\|_{L^2}+\|A^*u_n\|_{L^2}\|(P_V^n-1)u_n\|_{L^2}) \\
        &\lesssim\frac{1}{h_n}o(h_n)\to 0,
    \end{aligned}
\end{equation}
as $n\to \infty$. This completes the proof.

\end{proof}

\begin{remark}
    The main conclusion and the proof process of the lemma indicate that the semiclassical defect measure defined by the eigenfunction of $-\Delta+V$ is an invariant measure along the free geodesic flow and as the potential $V$ is a negligible tiny perturbation, we can simplify the situation of $-h^2\Delta+h^2V(x)-1$ to $-h^2\Delta-1$ which has been proved in \cite{DJ2018} and we can obtain the expansive observability inequality Theorem~\ref{staobser}. And it is worth to note that the lemma merely provides a physical context and explains the rationality of our proof. We do not use the lemma directly in the proof.
\end{remark}

\begin{remark}
    In \cite[Page 3]{DJ2018}, they set $\|(-h_j^2\Delta-1)u_j\|_{L^2}=o(h_j/\log(1/h_j))\|$ to proceed Theorem \ref{staest} to Theorem \ref{fullsupp}. However, in the proof of the above theorem, we only need to set $(-h_n^2\Delta-1)u_n\sim(P_V^n-1)u_n=o(h_n)$. For the difference, we say that to prove the invariance of the measure, the assumption $o(h)$ is enough rather than the stronger assumption $o(h/\log(1/h))$. If we want to prove the full-support property, assumption $o(h/\log(1/h))$ is essential. From a physical perspective, we prove the invariance of the measure for waves with a wave velocity of 1 over a period of time $O(1)$ along the geodesic. The full-support property and the ergodicity of the geodesic must require the Ehrenfest time $O(\log(1/h))$, see \cite[Section 4.3]{DJ2018}. Thus, we have clearly explained the differences in the assumptions involved.
\end{remark}

\begin{proof}[Proof of Theorem~\ref{staobser}] 

For $\chi\in C_c^\infty([1/2,2])$, we only need to consider $\chi(P_V)u$. As for $(1-\chi(P_V))u$, we have $$\|(1-\chi(P_V))u\|_{L^2}\lesssim C\|(P_V-1)u\|_{L^2},$$which can be directly controlled.

Then, we start from the observability inequality \eqref{obfree} and denote $\chi(P_V)u$ as $u$ For simplicity. \begin{equation}
    \begin{aligned}
        \|u\|_{L^2}&\leq C(a)\| \mathrm{Op}_h(a) u \|_{L^2} + C(a)\frac{\log(1/h)}{h} \| (-h^2\Delta-1) u \|_{L^2} \\
        &=C(a)\| \mathrm{Op}_h(a) u \|_{L^2} + C(a)\frac{\log(1/h)}{h} \| (P_V-1) u -h^2Vu\|_{L^2} \\
        &\leq C\| \mathrm{Op}_h(a) u \|_{L^2} + C\frac{\log(1/h)}{h} \| (P_V-1) u \|_{L^2}+C\frac{\log(1/h)}{h} \|h^2Vu\|_{L^2} \\
        &\leq C\| \mathrm{Op}_h(a) u \|_{L^2} + C\frac{\log(1/h)}{h} \| (P_V-1) u \|_{L^2}+Ch\log(1/h) \|V\|_{L^{\frac{p}{p-2}}}\|u\|_{L^{\frac{2p}{4-p}}}.
    \end{aligned}
\end{equation}Due to $2\leq p<4$, using Sogge's spectral clusters estimate Theorem \ref{sogge} and Theorem \ref{sogge_disturb}, we have \begin{equation}
    \begin{aligned}
        Ch\log(1/h) \|V\|_{L^{\frac{p}{p-2}}}\|u\|_{L^{\frac{2p}{4-p}}}&\lesssim Ch\log(1/h) \|V\|_{L^{\frac{p}{p-2}}}(\sum_{\lambda\in [1/2h,2/h]}\|\Pi_{[\lambda,\lambda+1]}u\|_{L^{\frac{2p}{4-p}}})\\
        &\lesssim Ch^{1/2-\delta(\frac{2p}{4-p})}\log(1/h) \|V\|_{L^{\frac{p}{p-2}}}(\sum_{\lambda\in [1/2h,2/h]}\|\Pi_{[\lambda,\lambda+1]}u\|^2_{L^{2}})^{1/2}\\
        &\lesssim Ch^{1/2-\delta(\frac{2p}{4-p})}\log(1/h) \|V\|_{L^{\frac{p}{p-2}}}\|u\|_{L^2}.
    \end{aligned}
\end{equation}Because $\delta(\frac{2p}{4-p})<\frac{1}{2}$, for any $2\leq p<4$, we can select the parameter $h$ small enough such that $Ch^{1/2-\delta(\frac{2p}{4-p})}\log(1/h) \|V\|_{L^{\frac{p}{p-2}}}<\frac{1}{2}$. Thus, we complete the proof.

\end{proof}
Following from the conclusion of Theorem~\ref{staobser}, we have that the defect measure generated by the eigenfunctions of $-\Delta+V$ is full-support, that is $\supp \mu=S^*M$.

\begin{remark}
    It is worth noting that by using the method of \cite{DJ2018}, one can obtain more general conclusions of Theorem~\ref{staobser}. However the extension is not obvious, since in \cite[Page 6]{DJ2018}, the stable and unstable horocyclic vector fields for $-\Delta+V$ do not necessarily have such good properties and we need more precise geometric analysis or other innovative methods to overcome the difficulty.
\end{remark}

\subsection{Proof of observability inequality}

With the above preparation, we now prove the following spectrally localized  semiclassical Strichartz estimate.
\begin{theorem}\label{semistrichartz}
    Let $\chi_1\in C_c^\infty((\frac{1}{2},2))$ be a smooth cutoff function. For any $V\in L^\frac{p}{p-2}$ and $u_0\in L^2$, the solution to \begin{equation}
        (ih\partial_t+h^2\Delta-h^2V(x))u=0,\ u|_{t=0}=\chi_1(P_V)u_0
    \end{equation}satisfies \begin{equation}
    \|u\|_{L^p_x(M;L^2_t([0,T]))}^2\lesssim h^{-4\delta(p)-2\epsilon}\|u_0\|_{L^{2}}^2.
\end{equation}
\end{theorem}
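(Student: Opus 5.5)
We prove the estimate with a $TT^*$ argument in time combined with the semiclassical Fourier transform, which reduces it to the resolvent bound of Proposition~\ref{resolwithV}.

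\textbf{Reduction to a time-weighted norm.} Write the solution as $u(t)=e^{-itP_V/h}\chi_1(P_V)u_0$. Fix $\rho\in C_c^\infty(\mathbb{R})$ with $\rho\ge 0$ and $\rho\geq 1$ on $[0,T]$. Then
\[
\|u\|_{L^p_xL^2_t([0,T])}\le \|\rho(t)u\|_{L^p_xL^2_t(\mathbb{R})}.
\]
For fixed $x$, Plancherel in $t$ applies with the semiclassical transform $\mathcal{F}_h$, up to a factor $h$ that we track carefully. It gives
\[
\|\rho u(\cdot,x)\|_{L^2_t}^2\simeq h\,\|\mathcal{F}_h[\rho u](\cdot,x)\|_{L^2_\tau}^2 .
\]
By the spectral theorem,
\[
\mathcal{F}_h[\rho u](\tau)=\frac{1}{2\pi h}\,\hat\rho\!\left(\frac{\tau+P_V}{h}\right)\chi_1(P_V)u_0 .
\]
Here $\hat\rho$ is Schwartz, so this operator is essentially a spectral localization of $P_V$ to a window of width $h$ around $-\tau$.

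\textbf{Window estimates and summation.} Since $p\ge2$, Minkowski's inequality lets us swap the norms to $L^2_\tau L^p_x$. It then suffices to bound
\[
\Big\|\hat\rho\Big(\frac{\tau+P_V}{h}\Big)\chi_1(P_V)u_0\Big\|_{L^p_x}\quad\text{for each }\tau .
\]
Write $\hat\rho((\tau+P_V)/h)=(P_V+\tau-i\eta)^{-1}\,g_\tau(P_V)$ with $\eta=h$. Here
\[
g_\tau(s)=(s+\tau-ih)\,\hat\rho\Big(\frac{s+\tau}{h}\Big),
\]
which satisfies $\|g_\tau(P_V)\|_{L^2\to L^2}\lesssim h$, uniformly in $\tau$, with rapid decay off the window. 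The $TT^*$ form of Proposition~\ref{resolwithV} (the $L^2\to L^p$ half, rescaled to energy $-\tau\in[1/2,2]$) then gives
\[
\|(P_V+\tau-ih)^{-1/2}\|_{L^2\to L^p}\lesssim h^{-1/2-\delta(p)-\epsilon}.
\]
Combining with $|(s+\tau-ih)^{1/2}\hat\rho(\cdot)|\lesssim h^{1/2}$, the window operator maps $L^2\to L^p$ with norm $\lesssim h^{-\delta(p)-\epsilon}$. Equivalently, and more simply, we can use Theorem~\ref{sogge_disturb} directly: the window covers $O(1)$ clusters $\Pi_{[\lambda,\lambda+1]}$ with $\lambda\sim h^{-1}$, with Schwartz tails in $\lambda$.

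Squaring and integrating in $\tau$, the $L^2$ pieces recombine by almost orthogonality of the windows:
\[
\int\|g_\tau(P_V)\chi_1 u_0\|_{L^2}^2\,d\tau\lesssim h\,\|u_0\|^2 .
\]
Collecting the powers of $h$, namely $h\cdot h^{-2}$ from Plancherel and the prefactor, times $h$ from the $\tau$-integral, times $h^{-2\delta(p)-2\epsilon}$ from the window bound, gives $h^{-2\delta(p)-2\epsilon}\|u_0\|^2$. This is even better than the claimed $h^{-4\delta(p)-2\epsilon}$, which therefore follows, since $h^{-2\delta(p)}\le h^{-4\delta(p)}$.

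\textbf{Main obstacle.} The delicate step is the Minkowski exchange. It gives $\|\cdot\|_{L^p_xL^2_\tau}\le\|\cdot\|_{L^2_\tau L^p_x}$, but the almost-orthogonal summation must be done in $L^2$ before applying the $L^p$ bound. This is where we either pass through the $TT^*$ kernel in $\tau$ or lose a factor. If the direct route fails, the fallback is the cruder $TT^*$ argument: write $\|u\|^2$ as a pairing of $F\in L^{p'}_xL^2_t$ against $\int\!\!\int e^{-i(t-s)P_V/h}\rho(t)\rho(s)\,ds\,dt$. Its time Fourier transform is a resolvent-type operator $(P_V+\tau\mp i h)^{-1}$ localized by $\chi_1^2$. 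Applying Proposition~\ref{resolwithV} with $\eta\sim h$ gives $h\cdot h^{-1-2\delta(p)-\epsilon}$ per frequency, and the $\tau$-localization costs the remaining $h^{-2\delta(p)-\epsilon}$. This yields exactly the stated $h^{-4\delta(p)-2\epsilon}$.
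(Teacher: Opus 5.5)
Your proposal is correct, and it takes a different and sharper route than the paper's proof.

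\textbf{The paper's route.} The paper never uses the explicit spectral form of $\mathcal{F}_h[\chi u]$. It treats $w=\chi(t)u$ as a solution of the inhomogeneous equation $(ih\partial_t-P_V)w=ih\chi'(t)u$ and damps it by $e^{-\eta t/h}$ with $\eta\sim h$. Fourier transforming in time gives the resolvent equation $(-P_V+\tau+i\eta)\check v_h=\check F_h$. The paper then applies the two-sided $L^{p'}\to L^p$ bound of Proposition~\ref{resolwithV}, after reducing informally to $\tau=1$, uses Minkowski on both sides, and controls the source by H\"older, $\|F\|_{L^{p'}_x}\lesssim\|F\|_{L^2_x}$. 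The factor $h$ in the source cancels the $h^{-1}$ of the resolvent. The two factors $h^{-\delta(p)}$ in the $L^{p'}\to L^p$ norm, one of which is wasted by the H\"older step, together with the $h^{-\epsilon}$ from the dyadic summation, produce the stated $h^{-4\delta(p)-2\epsilon}$.

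\textbf{Your route.} You use directly that $\mathcal{F}_h[\rho u](\tau)$ is a spectral window of width $h$ in $P_V$. Only the one-sided $L^2\to L^p$ cluster bound of Theorem~\ref{sogge_disturb} enters, which gives $h^{-2\delta(p)}$ and therefore implies the statement. The cluster version needs no resolvent estimate at all, so it avoids both the uniformity in $\tau$ and the informal reduction to $\tau=1$. Your choice $\rho\ge 1$ on $[0,T]$ also covers the whole interval, whereas the paper's cutoff equals $1$ only on $[T/10,9T/10]$. The paper's approach has one advantage: a single resolvent bound also handles inhomogeneous problems with sources in $L^{p'}_xL^2_t$.

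\textbf{A bookkeeping slip.} As written, $\int\|g_\tau(P_V)\chi_1(P_V)u_0\|_{L^2}^2\,d\tau$ is $O(h^3)\|u_0\|_{L^2}^2$, not $O(h)\|u_0\|_{L^2}^2$, since it is a multiplier of size $h$ on a window of width $h$. The factorization that actually produces your power count is $\hat\rho((\tau+P_V)/h)=A_\tau B_\tau$, with
\begin{itemize}
\item $A_\tau=h^{1/2}(P_V+\tau-ih)^{-1/2}$, so that $\|A_\tau\|_{L^2\to L^p}\lesssim h^{-\delta(p)-\epsilon}$;
\item $B_\tau=h^{-1/2}(P_V+\tau-ih)^{1/2}\hat\rho((\tau+P_V)/h)$, for which $\int\|B_\tau f\|_{L^2}^2\,d\tau\lesssim h\|f\|_{L^2}^2$.
\end{itemize}
Alternatively, use the cluster decomposition directly:
$\|\hat\rho((\tau+P_V)/h)\chi_1(P_V)u_0\|_{L^p}\lesssim h^{-\delta(p)}\sum_m\langle(\tau+h^2m^2)/h\rangle^{-N}\|\Pi_{[m,m+1]}\chi_1(P_V)u_0\|_{L^2}$.
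The weights sum to $O(1)$ uniformly in $\tau$, because consecutive values of $(\tau+h^2m^2)/h$ differ by $h(2m+1)\gtrsim1$ for $m\sim h^{-1}$. Their $\tau$-integrals are $O(h)$, so Cauchy--Schwarz in $m$ followed by integration in $\tau$ gives $h^{1-2\delta(p)}\|u_0\|_{L^2}^2$.

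In either form, the $L^p$ bound is applied at each fixed $\tau$ after the Minkowski exchange, and orthogonality is used only in $L^2$. So the ``main obstacle'' you flag is not a real obstruction, and the vaguer $TT^*$ fallback is unnecessary.
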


\begin{proof}
First, by taking the inverse semiclassical Fourier transform with respect to $t$, we have 
$$
\mathcal{F}_h^{-1}[ih\partial_tw]=\tau\check{w}_h
$$
and 
\begin{equation}\label{normfourier}
    \|\check{w}_h\|_{L^2_\tau}^2=2\pi h\|w\|_{L^2_t}^2.
\end{equation}
Let $w(t)=\chi(t)u(t)=\chi(t)e^{-itP_V/h}\chi_1(P_V)u_0$, where $\chi(t)\in C_c^\infty(\mathbb{R})$ is supported in $[0,T]$ and satisfies $\chi(t)\equiv 1\ \text{on}\ [T/10,9T/10]$. Then we consider
\begin{equation}
    (ih\partial_t-P_V)w=f:=ih\chi'(t)u(t).
\end{equation}Setting $v=e^{-\frac{\eta t}{h}}w$, we obtain\begin{equation}
    (ih\partial_t-P_V+i\eta)v=e^{-\frac{\eta t}{h}}f:=F.
\end{equation}
Taking the inverse semiclassical Fourier transform with respect to $t$, we get
\begin{equation}\label{1.28}
    (-P_V+\tau+i\eta)\check{v}_h=\check{F}_h.
\end{equation}
According to Theorem~\ref{fullsupp} and the proof of Theorem~\ref{staobser}, the proof of Theorem~\ref{Liederiv} and the assumption, it is enough to consider the problem near $S^*M$, where $\operatorname{spec}(P_V)\approx 1$. Applying the non-stationary phase argument to $\check{f}_h$, we obtain 
\begin{equation}
    \begin{aligned}
        \check{f}_h(\tau,x)&=\int e^{it\tau/h}ih\chi'(t)e^{-itP_V/h}\chi_1(P_V)u_0(x)dt =ih\check{\chi}_h(\tau-P_V)\chi_1(P_V)u_0(x)\\
        &=ih\left(\frac{h}{i(\tau-P_V)}\right)^N\int e^{it(\tau-P_V)/h}\chi^{(N+1)}(t)dt\,\chi_1(P_V)u_0(x).
    \end{aligned}
\end{equation}When $\tau$ lies far from the spectrum of $P_V$, this term gives a negligible contribution $\mathcal{O}(h^\infty)$ to the Strichartz estimate. Combined with the fact that Proposition~\ref{resolwithV} holds uniformly for $\tau$ in the resonant regime, which means that we can prove Proposition~\ref{resolwithV} for 1 replaced by $\tau\in [1/2,2]$, up to a negligible error,  we may write
$$
(-P_V+1+i\eta)\check{v}_h=\check{F}_h
$$ 
instead of \eqref{1.28}.



Using Proposition~\ref{resolwithV} and \eqref{normfourier}, we obtain \begin{equation}\label{Strichartz}
    \begin{aligned}
        \|\check{w}_h\|_{L^p_x(M;L^2_\tau(\mathbb{R}))}^2\sim& \|\check{v}_h\|_{L^p_x(M;L^2_\tau(\mathbb{R}))}^2\lesssim\|\check{v}_h\|_{L^2_\tau(\mathbb{R};L^p_x(M))}^2\lesssim h^{-2-4\delta(p)-2\epsilon}\|\check{F}_h\|_{L^2_\tau(\mathbb{R};L^{p'}_x(M))}^2\\
        \lesssim&\; h^{-2-4\delta(p)-2\epsilon}\|\check{F}_h\|_{L^{p'}_x(M;L^2_\tau(\mathbb{R}))}^2\lesssim h^{-2-4\delta(p)-2\epsilon}h\|F\|_{L^{p'}_x(M;L^2_t(\mathbb{R}))}^2\\
        \lesssim &\; h^{-1-4\delta(p)-2\epsilon}\|f\|_{L^{2}_x(M;L^2_t(\mathbb{R}))}^2\lesssim h^{1-4\delta(p)-2\epsilon}\|u\|_{L^{2}_x(M;L^2_t([0,T]))}^2\\
        \lesssim&\; h^{1-4\delta(p)-2\epsilon}\|u_0\|_{L^{2}}^2.
    \end{aligned}
\end{equation}
Since 
$$
\|\check{w}_h\|_{L^p_x(M;L^2_\tau(\mathbb{R}))}^2\simeq h\|w\|_{L^p_x(M;L^2_t(\mathbb{R}))}^2\simeq h\|u\|_{L^p_x(M;L^2_t([0,T]))}^2,
$$ 
combining this with \eqref{Strichartz} completes the proof.
\end{proof}

\begin{remark}
    This lemma will not be directly used during the proof. But we can use this lemma instead of the claim \eqref{claim} to prove that the potential is negligible in the context of space and time and further prove the invariance of the measure, which provides sufficient physical justification for our proof..
\end{remark}

Next, we prove the semiclassical observability result.
\begin{proposition}\label{semiob}
    Let $\psi\in C_c^\infty(\mathbb{R};[0,1])$ not identically zero, then for all $u_0\in L^2(M)$, we have \begin{equation}
        \|\chi_1(P_V)u_0\|^2_{L^2(M)}\lesssim \int_\mathbb{R}\|\psi(t)e^{-it(-\Delta+V)}\chi_1(P_V)u_0\|^2_{L^2(\Omega)}dt.
    \end{equation}In particular, for any $T>0$, we have \begin{equation}
        \|\chi_1(P_V)u_0\|^2_{L^2(M)}\lesssim \int_0^T\|e^{-it(-\Delta+V)}\chi_1(P_V)u_0\|^2_{L^2(\Omega)}dt.
    \end{equation}
\end{proposition}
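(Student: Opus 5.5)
We argue by contradiction, following the scheme of Jin \cite{JLcon}, with the stationary estimate of Theorem~\ref{staobser} replacing \cite[Theorem 2]{DJ2018}. Suppose the first inequality fails. Then there are $h_n\to0$ and $u_n=\chi_1(P_V^n)u_{0,n}$ with $\|u_n\|_{L^2}=1$ and
\[
\int_{\mathbb{R}}\|\psi(t)e^{-it(-\Delta+V)}u_n\|_{L^2(\Omega)}^2\,dt\to0 .
\]
The localization in $\chi_1(P_V)$ forces $h_n$ to be the relevant semiclassical scale. For bounded $h$ there are only finitely many eigenvalues, so the low-frequency case is handled separately below.

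We do not work with the time-dependent defect measure, which would only give invariance. Instead we pass to the stationary setting by a Fourier transform in time, as in Theorem~\ref{semistrichartz}. Set $w_n(t)=\psi(t)e^{-itP_V^n/h_n}u_n$ in semiclassical time; the rescaling $t\mapsto h_nt$ only changes constants, and we may replace $\psi$ by a rescaled cutoff, exactly as in \cite{JLcon}. Then
\[
(ih_n\partial_t-P_V^n)w_n=ih_n\psi'(t)e^{-itP_V^n/h_n}u_n .
\]
Taking $\check{(\cdot)}_{h_n}$ in $t$ gives, for each $\tau$, the identity $(P_V^n-\tau)\check w_n(\tau)=-ih_n\check g_n(\tau)$. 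By \eqref{normfourier} and Plancherel, $\int\|\check g_n(\tau)\|_{L^2}^2d\tau\lesssim h_n$, while $\int\|\check w_n(\tau)\|^2d\tau\simeq h_n\|\psi\|_{L^2}^2$. By non-stationary phase, as in the proof of Theorem~\ref{semistrichartz}, only $\tau\in[1/4,4]$ contributes, up to $O(h_n^\infty)$. After rescaling $h\mapsto h/\sqrt\tau$, we apply Theorem~\ref{staobser} for each such $\tau$, uniformly in $\tau$. We choose $a$ so that $\mathrm{Op}_h(a)$ is microlocally $\mathbf 1_\Omega$ near $S^*M$, taking $a=b(x)\tilde\chi(|\xi|^2)$ with $b\in C_c^\infty(\Omega)$.

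This yields, pointwise in $\tau$,
\[
\|\check w_n(\tau)\|_{L^2}\le C\|b\,\check w_n(\tau)\|_{L^2}+C\log(1/h_n)\|\check g_n(\tau)\|_{L^2}.
\]
Squaring, integrating in $\tau$ and using Plancherel gives
\[
h_n\|\psi\|^2\lesssim h_n\!\int\|\psi\, b\, e^{-itP_V/h}u_n\|^2dt+h_n\log^2(1/h_n)\!\int\|\psi' e^{-itP_V/h}u_n\|^2dt .
\]

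\textbf{Main obstacle.} The logarithmic loss in Theorem~\ref{staobser} prevents absorbing the $\psi'$ term on an $O(1)$ time window. We would first try to cure this by working on the Ehrenfest-type window $|t|\lesssim \log(1/h)$. Choose $\psi_n(t)=\psi(t/T_n)$ with $T_n\sim\log(1/h_n)$ in the unscaled time variable. Then $\|\psi_n'\|_{L^2}^2\sim T_n^{-1}$ while $\|\psi_n\|_{L^2}^2\sim T_n$, and the error term becomes $\log^2(1/h)\,T_n^{-2}$ relative to the main term. Taking $T_n=c^{-1}\log(1/h_n)$ with $c$ small, the error is absorbed. This gives
\[
T_n\lesssim\int_0^{T_n}\|e^{-it(-\Delta+V)}u_n\|_{L^2(\Omega)}^2\,dt .
\]
This is a long-time bound, whereas the proposition needs time $T$. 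We would reduce from time $T_n\sim\log(1/h)$ to a fixed $T$ in one of two ways. The first is to read off $h$-independent control directly from the time-$T$ observability in semiclassical time, since fixed physical time $T$ corresponds to semiclassical time $T/h\gg\log(1/h)$. Splitting $[0,T/h]$ into $\sim T/(h\log(1/h))$ windows of Ehrenfest length, each window contributes comparably by unitarity. Summing then gives $\|u_n\|^2\lesssim\int_0^T\|\cdots\|_{L^2(\Omega)}^2$, which contradicts the assumption. The second is the defect-measure contradiction, using Theorem~\ref{fullsupp} and the invariance from Theorem~\ref{Liederiv}; the potential term is negligible by \eqref{claim} and Theorem~\ref{semistrichartz}. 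The first route is the one we would carry out. The delicate point is the uniformity in $\tau$ of the constants in Theorem~\ref{staobser} and Proposition~\ref{resolwithV}, which the paper already asserts.

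\textbf{Low frequencies.} Finitely many eigenmodes are treated by the standard compactness–uniqueness argument. If $\int_0^T\|e^{-itH_V}\phi\|_{L^2(\Omega)}^2=0$ for $\phi$ in a finite spectral subspace, then, eigenvalue by eigenvalue using the distinct time frequencies, each eigencomponent vanishes on $\Omega$. Lemma~\ref{lem:ucp} then forces it to vanish identically. The second inequality of the proposition follows from the first by taking $\psi$ supported in $[0,T]$ with $0\le\psi\le1$.
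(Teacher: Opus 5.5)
Your first route is correct, but it handles the time scale differently from the paper, and the logarithmic ``main obstacle'' comes from your own setup.

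\textbf{How the paper avoids the obstacle.} The paper takes $w(t)=\psi(ht)v(t)$ with $v(t)=e^{-itP_V/h}\chi_1(P_V)u_0$. That is, it keeps $\psi$ as a cutoff in the physical time $s=ht$. By the chain rule, $(ih\partial_t-P_V)w=ih^2\psi'(ht)v$. So Theorem~\ref{staobser} leaves a remainder $h\log(1/h)\,\|\mathcal{F}_h^{-1}[\psi'(h\cdot)v](\tau)\|_{L^2}$. After Plancherel this gives $\|\psi\|_{L^2}^2\|\chi_1(P_V)u_0\|^2\lesssim\int\|\psi(s)e^{-is(-\Delta+V)}\chi_1(P_V)u_0\|_{L^2(\Omega)}^2ds+h^2\log^2(1/h)\|\psi'\|_{L^2}^2\|\chi_1(P_V)u_0\|^2$, up to $O(h^\infty)$. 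The error term is absorbed at once for $h$ small.

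In your notation, this is the window length $T_n=1/h_n$ instead of $c^{-1}\log(1/h_n)$. Your own error ratio $\log^2(1/h)/T_n^2$ then becomes $h^2\log^2(1/h)$. So your claim that the rescaling $t\mapsto h_nt$ ``only changes constants'' is false. That rescaling is exactly what beats the $\log(1/h)/h$ loss.

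\textbf{Why your detour still works, and what it buys.} Apply the Ehrenfest-window bound to $e^{-ikT_nP_V/h}u_n$, which lies in the range of $\chi_1(P_V)$ with the same norm. Summing over $\sim T/(h\log(1/h))$ disjoint windows gives the time-$T$ inequality with an $h$-independent constant. For the first inequality, cover an interval where $\psi\ge c_0>0$ by such windows. This yields a stronger intermediate statement: observability of spectrally localized data over physical time $\sim h\log(1/h)$. The price is extra bookkeeping; the paper's single window is shorter and handles a general $\psi$ directly.

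\textbf{Points to clean up.}
\begin{enumerate}
\item Your displayed window bound should involve $e^{-itP_V/h}$, not $e^{-it(-\Delta+V)}$. That window is short in physical time, not long, so the sentence calling it a long-time bound is backwards. Your next sentence does get the scaling right.
\item Drop the defect-measure alternative. Time-dependent sequences only give invariant measures (for instance, measures carried by closed geodesics), and Theorem~\ref{fullsupp} needs $o(h/\log(1/h))$ quasimodes.
\item The contradiction framing is unnecessary, since your estimate is direct.
\item Your low-frequency compactness--uniqueness step is correct and covers bounded $h$. The paper simply excludes that case by restricting to $h<h_0$.
\end{enumerate}
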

\begin{proof}
    We set $v(t)=e^{-ith(-\Delta+V)}\chi_1(P_V)u_0$ and write $w(t)=\psi(ht)v(t)$. It is obvious that $v(t)$ satisfies the Schr\"odinger equation $(ih\partial_t-P_V)v=0$ and $w(t)$ solves $$(ih\partial_t-P_V)w=ih^2\psi'(ht)v.$$

    Taking the inverse semiclassical Fourier transform, we can get $$(P_V-\tau)\check{w}_h(\tau)=-ih^2\mathcal{F}_h^{-1}[\psi'(ht)v](\tau).$$

    Choosing $a\in C_c^\infty$ to be supported in $\{(x,\xi):x\in\Omega\}$ and $\chi\in C_c^\infty(\Omega;[0,1])$ to be regarded as a function on $T^*M$, such that $\chi\equiv 1$ on the neighborhood of $\supp a$, we have $\mathrm{Op}_h(a) \check{w}_h=\mathrm{Op}_h(a) \chi \check{w}_h+\mathrm{Op}_h(a) (1-\chi)\check{w}_h$ where $\mathrm{Op}_h(a) (1-\chi)=\mathcal{O}_{L^2\to L^2}(h^\infty)$. Then, we have $$\|\mathrm{Op}_h(a) \check{w}_h\|_{L^2(M)}\leq \|\check{w}_h(\tau)\|_{L^2(\Omega)}.$$

    Thus, following the same strategy as \cite[Proposition 2.1]{JLcon}, considering $\tau\in [1/2,2]$, using from Theorem~\ref{staobser}, we have \begin{equation}
        \|\check{w}_h(\tau)\|_{L^2(M)}\lesssim\|\check{w}_h(\tau)\|_{L^2(\Omega)}+hlog(1/h)\|\mathcal{F}_h^{-1}[\psi'(ht)v](\tau)\|_{L^2(M)}.
    \end{equation}
    
    As for $\tau\not\in[\frac{1}{2},2]$, since \begin{equation}
        \check{w}_h(\tau)=\int_\mathbb{R}e^{-it(P_V-\tau)/h}\psi(ht)\chi_1(P_V)u_0dt
    \end{equation} and integration by parts, we get \begin{equation}
        \|\check{w}_h(\tau)\|_{L^2(M)}=\mathcal{O}_{L^2\to L^2}((h\langle\tau\rangle^{-1})^\infty)\|\chi_1(P_V)u_0\|_{L^2(M)}.
    \end{equation}

    Therefore, we have \begin{equation}
        \begin{aligned}
            \|\check{w}_h(\tau)\|_{L^2(M)}\lesssim&\|\check{w}_h(\tau)\|_{L^2(\Omega)}+hlog(1/h)\|\mathcal{F}_h^{-1}[\psi'(ht)v](\tau)\|_{L^2(M)} \\
            &+\mathcal{O}_{L^2\to L^2}((h\langle\tau\rangle^{-1})^\infty)\|\chi_1(P_V)u_0\|_{L^2(M)}
        \end{aligned}
    \end{equation} and \begin{equation}
        \begin{aligned}
            \|\check{w}_h(\tau)\|_{L^2(\mathbb{R}_\tau,L^2(M))}\lesssim&\|\check{w}_h(\tau)\|_{L^2(\mathbb{R}_\tau,L^2(\Omega))}+hlog(1/h)\|\mathcal{F}_h^{-1}[\psi'(ht)v](\tau)\|_{L^2(\mathbb{R}_\tau,L^2(M))} \\
            &+\mathcal{O}_{L^2\to L^2}(h^\infty)\|\chi_1(P_V)u_0\|_{L^2(M)}.
        \end{aligned}
    \end{equation}
    Using the definition of the inverse semiclassical Fourier transform, we obtain\begin{equation}
        \begin{aligned}
            \|w_h(t)\|_{L^2(\mathbb{R},L^2(M))}\lesssim&\|w_h(t)\|_{L^2(\mathbb{R},L^2(\Omega))}+hlog(1/h)\|\psi'(ht)v\|_{L^2(\mathbb{R},L^2(M))} \\
            &+\mathcal{O}_{L^2\to L^2}(h^\infty)\|\chi_1(P_V)u_0\|_{L^2(M)}
        \end{aligned}
    \end{equation}and further\begin{equation}
        \begin{aligned}
            h^{-1}\|\psi(t)\|_{L^2(\mathbb{R})}^2\|\chi_1(P_V)u_0\|^2_{L^2(M)}\lesssim &h^{-1}\int_\mathbb{R}\|\psi(ht)e^{-iht(-\Delta+V)}\chi_1(P_V)u_0\|^2_{L^2(\Omega)}dht \\
            &+hlog(1/h)h^{-1}\|\psi'(t)\|_{L^2(\mathbb{R})}^2\|\chi_1(P_V)u_0\|^2_{L^2(M)} \\
            &+\mathcal{O}_{L^2\to L^2}(h^\infty)\|\chi_1(P_V)u_0\|_{L^2(M)}^2.
        \end{aligned}
    \end{equation}
    
    Setting $h$ small enough, we have finished the proof.

\end{proof}

\begin{lemma}[Spectral averaging lemma]\label{spectral_ave}
    Let $H_V:=-\Delta+V$ be a self-adjoint operator on $L^2$ and $P_{H_V}$ denote its projection-valued spectral measure, let $E\in \mathbb{R}$. Then, for every $u\in L^2$, we have \begin{equation}
        \lim_{R\to \infty}\|\frac{1}{2R}\int_{-R}^Re^{-it(H_V-E)}udt-P_{H_V}(\{E\})u\|_{L^2}=0.
    \end{equation}
\end{lemma}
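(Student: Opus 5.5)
This is the von Neumann mean ergodic theorem. We prove it through the spectral theorem for the self-adjoint operator $H_V$, which is self-adjoint on the domain $H^2(M)$ as recalled in Section 2. Replacing $H_V$ by $H_V-E$, we may assume $E=0$. Write $d\mu_u(\lambda)=\langle P_{H_V}(d\lambda)u,u\rangle$ for the spectral measure of $u$. The functional calculus gives
\begin{equation*}
\frac{1}{2R}\int_{-R}^R e^{-itH_V}u\,dt = m_R(H_V)u,\qquad m_R(\lambda)=\frac{1}{2R}\int_{-R}^R e^{-it\lambda}\,dt=\begin{cases}\frac{\sin(R\lambda)}{R\lambda}, & \lambda\neq 0,\\ 1, & \lambda=0.\end{cases}
\end{equation*}
To justify moving the time integral inside the functional calculus, we use Fubini against the spectral measure. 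The integral is a strongly continuous Bochner integral, so this is routine: pair both sides with an arbitrary $v\in L^2$ and use that $\langle e^{-itH_V}u,v\rangle=\int e^{-it\lambda}\,d\langle P_{H_V}(\lambda)u,v\rangle$.

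Since $P_{H_V}(\{0\})=\mathbbm{1}_{\{0\}}(H_V)$, the spectral theorem gives
\begin{equation*}
\Big\|m_R(H_V)u-P_{H_V}(\{0\})u\Big\|_{L^2}^2=\int_{\mathbb{R}}|m_R(\lambda)-\mathbbm{1}_{\{0\}}(\lambda)|^2\,d\mu_u(\lambda).
\end{equation*}
The integrand vanishes at $\lambda=0$. It is bounded by $1$ everywhere, because $|\sin x/x|\le 1$. It tends to $0$ pointwise for $\lambda\neq0$, since $|m_R(\lambda)|\le (R|\lambda|)^{-1}$. As $\mu_u$ is a finite measure with total mass $\|u\|_{L^2}^2$, dominated convergence shows that the integral tends to $0$ as $R\to\infty$, which is the claim.

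Alternatively, one can use that $H_V$ has compact resolvent on the compact surface $M$, since $V$ is a relatively compact perturbation of $-\Delta$. Expanding $u=\sum_j u_j\phi_j$ in the orthonormal eigenbasis, the difference above becomes $\sum_{\lambda_j^2\neq E}u_j\, m_R(\lambda_j^2-E)\phi_j$, and the same dominated convergence argument applies to the series. The only point needing care is the identification of the time average with $m_R(H_V)$. Beyond that there is no real obstacle, and the potential $V$ enters only through the self-adjointness of $H_V$.
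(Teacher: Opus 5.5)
Your proof is correct and follows the paper's argument essentially verbatim: you identify the time average with $m_R(H_V)$ via the functional calculus, write the squared error as $\int_{\mathbb{R}}|m_R-\mathbf{1}_{\{E\}}|^2\,d\mu_u$, and conclude by dominated convergence using the bound by $1$ and pointwise convergence. Your Fubini justification of the identification and the eigenbasis variant are harmless extras that the paper does not include.
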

\begin{proof}
    We set $$\mathcal{A}_Ru=\frac{1}{2R}\int_{-R}^Re^{-it(H_V-E)}udt$$ which is bounded with norm $\|\mathcal{A}_R\|\leq 1$. For $s\in\mathbb R$, we define $$m_R(s)=\frac{1}{2R}\int_{-R}^Re^{-it(s-E)}dt=\begin{cases}
        1,\qquad &s=E, \\
        \frac{\sin(R(s-E))}{R(s-E)}, \, &s\not=E.
    \end{cases}$$ Thus, we have $\mathcal{A}_R=m_R(H_V)$. For any fixed $u\in L^2$ and $S\subset \mathbb{R}$, we introduce the finite positive measure $$\mu_u(\mathbb{R})=\|u\|^2_{L^2}, \quad \mu_u(S)=\|P_{H_V}(S)u\|^2_{L^2}.$$
    
    Therefore, we obtain \begin{equation}
        \begin{aligned}
            &\|\frac{1}{2R}\int_{-R}^Re^{-it(H_V-E)}udt-P_{H_V}(\{E\})u\|^2_{L^2}=\|m_R(H_V)u-P_{H_V}(\{E\})u\|^2_{L^2} \\
            =&\|(m_R-\mathbf{1}_{\{E\}})(H_V)u\|^2_{L^2}=\int_{\mathbb{R}}|m_R-\mathbf{1}_{\{E\}}|^2d\mu_u(s).
        \end{aligned}
    \end{equation}Since for any $s\in \mathbb{R}$, we have $m_R(s)\to \mathbf{1}_{\{E\}}(s)$ as $R\to \infty$ and $|m_R-\mathbf{1}_{\{E\}}|^2\leq 1$, using the dominated convergence theorem, we complete the proof.
\end{proof}

We are now ready to prove the main theorem of this paper.
\begin{proof}[Proof of Theorem~\ref{obser}]
We prove again by contradiction. Assume there exists a sequence of $\{u_n\}$ that satisfy $\|u_n(0)\|_{L^2}=1$ and $\int_0^T\|u_n(t,\cdot)\|_{L^2(\Omega)}^2dt\to 0$ where $u_n(t,x)=e^{-it(-\Delta+V)}u_n(0,x)$. Thus, we have $u_n(x)\rightharpoonup u(x)$ and $\int_0^T\|e^{-it(-\Delta+V)}u(0,x)\|_{L^2(\Omega)}^2dt=0$. Let operator $B$ be the restriction operator $B:L^2(M)\to L^2(\Omega)$ and $F_g(z)=\langle e^{-izH_V}f,B^*g\rangle$ for any test function $g\in L^2(\Omega)$. We have $$Be^{-itH_V}u=0\quad \text{for almost every } t\in (0,T).$$ Since $F_g(z)$ is holomorphic on $Im z<0$ and continuous onto the real axis, using the Schwarz reflection principle to $F_g(z)$ and the extension property, we get $$Be^{-itH_V}u=0\quad \text{for all } t\in \mathbb{R}.$$Using Lemma~\ref{spectral_ave} directly, we have that $$v_R:=\mathcal{A}_Ru=\frac{1}{2R}\int_{-R}^Re^{-it(H_V-E)}udt=\frac{1}{2R}\int_{-R}^Re^{itE}e^{-itH_V}udt$$converges strongly to $\Pi_Eu$, for any $E\in \mathbb{R}$. Then we can get $$Bv_R=\frac{1}{2R}\int_{-R}^Re^{itE}Be^{-itH_V}udt=0,\quad\text{for every}\, R>0$$ and $$B\Pi_Eu:=Bv=0,$$which means $v|_{\Omega}=0$.

Since the following equation \begin{equation}
    (-\Delta+V-E)v=0,\quad v|_{\Omega}=0,
\end{equation}and the Strong Unique Continuation Property Lemma~\ref{lem:ucp}, we obtain $v=0$. Therefore, we can get obviously $\Pi_Eu=0$, for any distinct eigenvalue $E$. Combined with the completeness of the eigenspaces, we have $u=0$ and $u_n(x)\rightharpoonup 0$.

The proof proceeds by considering two cases: the high frequency term $h_n^{-1}\to \infty$ and the low frequency term $h_n^{-1}\leq R<\infty$, where $R$ is a constant large enough. For each $u_n(x)$, we decompose it $u_n=\Pi_{\leq R}u_n+\Pi_{> R}u_n:=u_{n,far}+u_{n,near}$, where $u_{n,near}$ represents the low frequency term and $u_{n,far}$ represents the high frequency term.

For the low frequency term $\{u_{n,near}\}$, since the spectrum is a discrete spectrum and the projection operator $\Pi_{\leq R}$ is of finite rank where $\Pi_{\leq R}$ projects onto the spectral subspace $\mathcal{H}_{R}:=span\{\phi_j, \lambda_j\leq R\}$ which is finite-dimensional, combined with $\Pi_{\leq R}u_n(x)\rightharpoonup 0$, we have that $\Pi_{\leq R}u_n(x)\to 0$.

For the high frequency term $\{u_{n,far}\}$, since the low frequency term converge strongly to zero, we select a subsequence such that $\|u_{n,far}\|_{L^2}\geq \frac{1}{2}$. Exactly, we have $\lim_{n\to\infty}\|u_{n,far}\|_{L^2}=1$. By suitable scaling, we can reset $\|u_{n,far}\|_{L^2}=1$ and $\int_0^T\|e^{-itH_V}u_{n,far}(t,\cdot)\|_{L^2(\Omega)}^2dt\to 0$, as $n\to \infty$. We further decompose the high frequency term by the partition of unity $$\phi_0^2(s)+\sum_{j=1}^\infty\phi^2(2^{-2j}s)=I$$ where $\phi_0\in C_c^\infty([-2,2]),\, \phi\in C_c^\infty([1/4,4])$ and we extended $\phi$ to an even function. By the decomposition, we define $$w_{n,j}=\phi(2^{-2j}(-\Delta+V))u_{n,far}\quad \text{for all}\ n\ \text{and}\ j\geq J,$$where $J=\max\{j,2^j\leq R\}$ and $\lim_{n\to\infty}\sum_{j=J}^\infty\|w_{n,j}\|_{L^2}^2=\lim_{n\to\infty}\|u_{n,far}\|_{L^2}^2=1$. Using Proposition~\ref{semiob}, for any $n,j$, we have \begin{equation}
    \|w_{n,j}\|^2_{L^2(M)}\lesssim \int_\mathbb{R}\|\psi(t) e^{-itH_V}w_{n,j}\|^2_{L^2(\Omega)}dt.
\end{equation}

Since the equation $(D_t+H_V)e^{-itH_V}=0$, for $D_t=-i\partial_t$, we get $$\psi(t) e^{-itH_V}\phi(2^{-2j}H_V)=\psi(t)\phi(2^{-2j}H_V) e^{-itH_V}=\psi(t)\phi(-2^{-2j}D_t) e^{-itH_V}=\psi(t)\phi(2^{-2j}D_t) e^{-itH_V}$$ and $$\psi(t)\phi(2^{-2j}D_t)=\psi(t)\phi(2^{-2j}D_t)\tilde{\psi}(t)+\psi(t)[\tilde{\psi}(t),\phi(2^{-2j}D_t)]$$where $\tilde{\psi}\in C_c^\infty((0,T);[0,1])$ is a smooth function such that $\tilde{\psi}\equiv1$ on $\supp \psi$ and for any $N>0$, we have $$\partial^\alpha\left[\psi(t)[\tilde{\psi}(t),\phi(2^{-2j}D_t)]\right]=\mathcal{O}(2^{-jN}\langle t\rangle^{-N}\langle D_t\rangle^{-N}).$$

Therefore, we have \begin{equation}
    \begin{aligned}
        \sum_{j=J}^\infty\|w_{n,j}\|_{L^2}^2\lesssim&\sum_{j=J}^\infty\int_\mathbb{R}\|\psi(t) e^{-itH_V}w_{n,j}\|^2_{L^2(\Omega)}dt \\
        \lesssim&\sum_{j=J}^\infty\int_\mathbb{R}\|\phi(2^{-2j}D_t)\tilde{\psi}(t) e^{-itH_V}u_{n,far}\|^2_{L^2(\Omega)}dt \\
        &+\sum_{j=J}^\infty2^{-jN}\int_\mathbb{R}\langle t\rangle^{-N}\|e^{-itH_V}u_{n,far}\|^2_{L^2(\Omega)}dt \\
        \lesssim& \int_0^T\| e^{-itH_V}u_{n,far}\|^2_{L^2(\Omega)}dt+2^{-JN}\|u_{n,far}\|^2_{L^2(M)}.
    \end{aligned}
\end{equation}

As $n$ goes to infinity and we select the constant $J$, $N$ and $R$ large enough, we obtain a contradiction and finish the proof.
\end{proof}

\noindent{\textbf{Acknowledgements}: } T. Z. was supported by Natural Science Foundation of Zhejiang Province (Grant No. LDQ23A010001). 

	\vspace{2mm}
	
\noindent \textbf{Conflict of interest.} On behalf of all authors, the corresponding author states that there is no conflict of interest.
	\vspace{2mm}

\noindent \textbf{Data availability statement.}
	Data sharing is not applicable to this article as no data sets were generated or analyzed during the current study.


\end{document}